\documentclass[11pt,reqno]{amsart}

\topmargin=0cm\textheight=22cm\textwidth=15cm
\oddsidemargin=0.5cm\evensidemargin=0.5cm
\setlength{\marginparwidth}{2cm}
\usepackage[T1]{fontenc}
\usepackage{graphicx}
\usepackage{amssymb,amsthm,amsmath,mathrsfs,bm,braket,marginnote}
\usepackage{enumerate}
\usepackage{appendix}
\usepackage[colorlinks=true, pdfstartview=FitV, linkcolor=blue, citecolor=blue, urlcolor=blue]{hyperref}
\usepackage{multirow}
\usepackage{subcaption}
\usepackage[linesnumbered,ruled,vlined]{algorithm2e}

\usepackage{graphicx}
\usepackage{booktabs}
\usepackage[english]{babel}

%Graphics
\usepackage{pgf}
\usepackage{pgfplots}
\usepackage{tikz}
\usetikzlibrary{arrows,calc}
\usepackage{verbatim}
\usetikzlibrary{decorations.pathreplacing,decorations.pathmorphing}
\usepackage[numbers,sort&compress]{natbib}
\usepackage{dsfont}

\numberwithin{equation}{section}
\linespread{1.2}
\newtheorem{theorem}{Theorem}[section]

\newtheorem{definition}[theorem]{Definition}
\newtheorem{remark}[theorem]{Remark}

\newtheorem{example}[theorem]{Example}

\newtheorem{proposition}[theorem]{Proposition}

 \reversemarginpar

\def\({\left(}
\def\){\right)}
\begin{document}
\title[non-commutative hungry Toda lattice and matrix computation]{Discrete non-commutative hungry Toda lattice and its application in matrix computation}	
\author{Zheng Wang}
\address{School of Mathematical Sciences, Ocean University of China, Qingdao, Shandong, China}
\email{wangzheng9306@stu.ouc.edu.cn}
\author{Shi-Hao Li}
\address{Department of Mathematics, Sichuan University, Chengdu, Sichuan, China}
\email{shihao.li@scu.edu.cn}
\author{Kang-Ya Lu}
\address{School of Applied Science, Beijing Information Science and Technology University, Beijing, China}
\email{lukangya@bistu.edu.cn}
\author{Jian-Qing Sun}
\address{School of Mathematical Sciences, Ocean University of China, Qingdao, Shandong, China}
\email{sunjianqing@ouc.edu.cn}

\subjclass[2020]{15A18, 15B33, 33C47, 37K10}
\date{}

\dedicatory{}

\keywords{Eigenvalue algorithms, matrix-valued orthogonal polynomials, non-commutative integrable systems}

\maketitle

\begin{abstract}
In this paper, we plan to show an eigenvalue algorithm for block Hessenberg matrices by using the idea of non-commutative integrable systems and matrix-valued orthogonal polynomials. 
We introduce adjacent families of matrix-valued $\theta$-deformed bi-orthogonal polynomials, and derive corresponding discrete non-commutative hungry Toda lattice from discrete spectral transformations for polynomials. It is shown that this discrete system can be used as a pre-precessing algorithm for block Hessenberg matrices. Besides, some convergence analysis and numerical examples of this algorithm are presented.
%The discrete non-commutative hungry Toda lattice is
%derived with the help of adjacent families of matrix-valued $\theta$-deformed bi-orthogonal polynomials. It is shown that this discrete system can be used as a pre-precessing algorithm for block Hessenberg matrices, which can be viewed as a generalization of block qd-algorithm. The corresponding convergence analysis and some numerical examples are also presented.
\end{abstract}

\section{Introduction}\label{sec.1}

A resurgence of interest in integrable algorithm came with the equivalence between discrete Toda equation and QR algorithm found by Symes in 1980s \cite{symes82}. Since then, people realized that many famous algorithms have close connections with completely integrable dynamic systems. In \cite{deift83}, it was found by Deift, Nanda and Tomei that the continuous Toda equation could be used to compute eigenvalues for symmetric matrix. Subsequently, Watkins realized that many other decompositions, such as LU decomposition, QR decomposition and Cholesky decomposition, could be fulfilled by isospectral flows \cite{watkins84}. In fact, with the notions of Lie group and Lie algebra introduced into the study of isospectral flows, in \cite{watkins88}, self-similar flows were proposed by considering more general matrix factorizations. Besides, Chu and Norris proposed abstract matrix factorizations generated by subspace decompositions which are not necessarily Lie algebra decompositions in \cite{chu88}. In \cite{deift89}, the authors showed that the above-mentioned matrix factorizations are completely integrable Hamiltonian flows. The development in this direction promotes the interdisciplinary studies of orthogonal polynomials, integrable systems, random matrix theory and numerical analysis, see \cite{deift03,chu08,adler95} and references therein.

In 1990s, it was found by Papageorgiou, Grammaticos and Ramani that the well-known $\varepsilon$-algorithm in convergence acceleration algorithm could be regarded as a discrete potential KdV lattice equation \cite{papageorgiou93}. Subsequently, Nagai, Tokihiro and Satsuma in \cite{nagai1998} demonstrated that the Toda molecule solution is related to the $\varepsilon$-algorithm and Pad\'e approximation. Besides, some of these authors demonstrated that the $\eta$-algorithm is nothing but the disrete KdV equation, and the generalization of $\rho$-algorithm is the cylindrical KdV equation \cite{nagai97}.
 Although it seems that the integrability sheds light on novel interpretations of different algorithms, how to design an integrable algorithm was mysterious at that time. In \cite{nakamura98,nakamura982}, Nakamura and his collaborators used Toda molecule equation to calculate Laplace transforms and a decoding algorithm for BCH Goppa code. Several eigenvalue and singular value algorithms were proposed by considering discrete Lotka-Volterra equation and its hungry analogy \cite{iwasaki04,fukuda09}.
Later, Sun et al. designed an eigenvalue algorithm for structured matrices by using a Bogoyavlensky lattice \cite{sun10}. Besides, there have been many developments in the design of convergence acceleration algorithm by using discrete integrable systems, such as \cite{brezinski12,brezinski11,he11,sun13}.

In recent years, the theory of discrete integrable systems has undergone a true revolution and one of the most fascinating fields is the non-commutative discrete integrable system.
In \cite{gekhtman93}, the inverse spectral problem for non-abelian Toda lattice was considered, and thus matrix-valued orthogonal polynomials have some foundations with non-commutative integrable systems.
The rise of quasi-determinants provided us a powerful algebraic tool to deal with the non-commutativity \cite{gelfand05}. Gelfand et al showed that matrix-valued orthogonal polynomials could be written in terms of quasi-determinants and solutions of some non-commutative integrable systems could be expressed by quasi-determinant as well. It was shown in \cite{li21} that
non-abelian Toda lattice could be derived by the compatibility of the spectral problem and time evolutions of semi-classical matrix-valued orthogonal polynomials. As mentioned, the earlier development of Lie algebra and Lie group decompositions have tight connections with orthogonal polynomials theory and Toda theory.
In non-commutative case, one of the authors in this paper generalized different matrix-valued orthogonal polynomials and derived some new integrable systems \cite{li231,gilson23,wang23}.
It is also worth mentioning that the relationship between non-commutative integrable systems and numerical algorithms has been attracting attention and interest. For example, the non-commutative version of the qd-algorithm derived by Wynn \cite{wynn1963nc} is exactly the full-discrete non-commutative Toda lattices. Brezinski showed how the cross rules of non-scalar $\varepsilon$-algorithms lead to coupled non-Abelian lattice equations in \cite{brezinski2010}. The non-commutative version of the Hermite-Pad\'e type I approximation problem is introduced in \cite{doliwa2022hermite}, and linked with the non-commutative Hirota (discrete Kadomtsev-Petviashvili) system. However, as far as we know, there are currently no examples of designing numerical algorithms for matrix eigenvalues based on non-commutative integrable systems. 

The aim of this paper is to design a new algorithm for block Hessenberg
matrix eigenvalues from a non-commutative integrable system, with the help of quasi-determinants and deformed matrix-valued orthogonal polynomials.
In Section \ref{sec.2}, we give a brief review of matrix-valued $\theta$-deformed bi-orthogonal polynomials. It was shown that the spectral operators for matrix-valued $\theta$-deformed bi-orthogonal polynomials could be written as upper/lower Hessenberg matrices with block matrix elements, and these banded block Hessenberg matrices are the objects we design algorithms for. As algorithm should correspond to some discrete dynamic systems, in Section \ref{sec.3}, we introduce an adjacent families of matrix-valued $\theta$-deformed bi-orthogonal polynomials and study their discrete spectral transformations. From the compatibility conditions of these discrete spectral transformations, the non-commutative hungry Toda lattices is derived.
We further show that discrete spectral transformations are equivalent to the recurrence relations, and thus the iteration process for eigenvalues could be realized by discrete spectral transformations.
In Section \ref{sec.4}, the asymptotic convergence of the non-commutative hungry Toda lattices is given, based on which we obtain a pre-processing algorithm for block Hessenberg matrix eigenvalues. Some numerical examples are presented in Section \ref{sec.5}. Section \ref{sec.6} is devoted to conclusions and discussions.

\section{A brief review of matrix-valued $\theta$-deformed bi-orthogonal polynomials}\label{sec.2}
In this section, we give a brief review to matrix-valued $\theta$-deformed bi-orthogonal polynomials, whose recurrence relation give rise to some upper and lower Hessenberg matrices. As we intend to propose some block algorithms for Hessenberg matrices, let's start with an introduction to $\theta$-deformed bilinear form.

\subsection{A $\theta$-deformed bilinear form}
In this part, we introduce a $\theta$-deformed bilinear form, which is a generalization of inner product for standard orthogonal polynomials. Such an idea could be dated back to earlier results of Konhauser \cite{konhauser67} and Carlitz \cite{carlitz68}, in which the $\theta$-deformed bilinear form is defined by $\langle \cdot,\cdot\rangle_{\theta}:\, \mathbb{R}[x]\times \mathbb{R}[x]\to\mathbb{R}$ such that
\begin{align}\label{tbilinear}
\langle f(x),g(x)\rangle_\theta=\int_{\mathbb{R}}f(x)g(x^\theta)d\mu(x),\quad f(x),\,g(x)\in\mathbb{R}[x],
\end{align}
where $d\mu(x)$ is a positive measure compactly supported on intervals of $\mathbb{R}$.

To make this bilinear form defined on the space of polynomials with matrix-valued coefficients, we consider a matrix-valued Radon measure $\mu:(-\infty,\infty)\rightarrow \mathbb{R}^{p\times p}$. Without any abuse of notations, we use $\mathbb{R}^{p\times p}$ to stand for the set of all $p\times p$  matrices with real elements. Moreover, if we normalize the Radon measure $\mu(\mathbb{R})=\mathbb{I}_{p}$, where $\mathbb{I}_{p}$ is a $p\times p$ identity matrix, then according to the Radon-Nikodym theorem, the measure $\mu(\mathbb{R})$ is related to a matrix-valued weight function $W(x)$, such that $d\mu(x)=W(x)dx$. Details could be referred to Damanik et al \cite{damanik07} where the analytic theory of matrix-valued polynomials is given.
Therefore, with a matrix-valued weight function $W(x)$, we generalize the $\theta$-deformed bilinear form \eqref{tbilinear} to a matrix-valued $\theta$-deformed bilinear form
\begin{align}\label{b}
\langle\cdot,\cdot\rangle_{\theta}:\mathbb{R}^{p\times p}[x]\times\mathbb{R}^{p\times p}[x]\rightarrow\mathbb{R}^{p\times p},\quad \langle f(x),g(x)\rangle_{\theta}=\int_{\mathbb{R}}f(x)W(x)g^{\top}(x^{\theta})dx,
\end{align}
where $\top$ means the transpose of a matrix.
We should remark that the parameter $\theta$ could be taken as any value in $\mathbb{R}_+$. However, in this paper, we focus mainly on $\theta\in \mathbb{Z}_+$ case, in which an integrable eigenvalue algorithm for block upper/lower Hessenberg matrices could be given.

Importantly, such a bilinear form has some basic property from its definition, as stated in \cite[proposition 2.1]{gilson23}.
\begin{proposition}
The bilinear form \eqref{b} has the following properties:
\begin{enumerate}
\item {\textbf{Bimodule structures}}. For any $L_1,\,L_2,\,R_1,\,R_2\in\mathbb{R}^{p\times p}$ and $f_1(x),f_2(x),g_1(x),g_2(x)\in\mathbb{R}^{p\times p}[x]$, we have
 \begin{align}\label{bimodule}
 \begin{aligned}
& {\langle L_1f_1(x)+L_2f_2(x),g(x)\rangle}_\theta=L_1{\langle f_1(x), g(x)\rangle}_\theta+L_2{\langle f_2(x),g(x)\rangle}_\theta,\\
& {\langle f(x),R_1 g_1(x)+R_2 g_2(x)\rangle}_\theta={\langle f(x),g_1(x)\rangle}_\theta R_1^\top+{\langle f(x),g_2(x)\rangle}_\theta R_2^\top.
\end{aligned}
\end{align}
\item {\textbf{Quasi-symmetry property}.} For any $f(x),g(x)\in\mathbb{R}^{p\times p}[x]$, it holds that
\begin{align}\label{qs}
&\langle x^\theta f(x), g(x)\rangle_\theta = \langle f(x),x g(x)\rangle_\theta,\quad \text{~for~} \theta\in\mathbb{Z}_+.
\end{align}
\end{enumerate}
\end{proposition}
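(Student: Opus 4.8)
The two properties are essentially formal consequences of the definition \eqref{b}, so the plan is to verify each by direct substitution and rearrangement, taking care about where matrices sit relative to the integral sign.

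For the bimodule structures, I would start from the left-hand side of the first identity and plug into \eqref{b}: $\langle L_1f_1(x)+L_2f_2(x),g(x)\rangle_\theta = \int_{\mathbb{R}}(L_1f_1(x)+L_2f_2(x))W(x)g^{\top}(x^{\theta})dx$. Since $L_1,L_2$ are constant matrices, they pull out of the integral on the left, giving $L_1\int_{\mathbb{R}}f_1(x)W(x)g^{\top}(x^{\theta})dx + L_2\int_{\mathbb{R}}f_2(x)W(x)g^{\top}(x^{\theta})dx$, which is exactly $L_1\langle f_1,g\rangle_\theta + L_2\langle f_2,g\rangle_\theta$. For the second identity, the key observation is that $(R_1g_1(x)+R_2g_2(x))^{\top} = g_1^{\top}(x)R_1^{\top} + g_2^{\top}(x)R_2^{\top}$, so after substituting $x^{\theta}$ for $x$ in the second slot and pulling the constant matrices $R_1^{\top},R_2^{\top}$ out to the right of the integral, one recovers $\langle f,g_1\rangle_\theta R_1^{\top} + \langle f,g_2\rangle_\theta R_2^{\top}$. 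The only subtlety is bookkeeping the transpose correctly and remembering that constants come out on the right because the transpose reverses the order of the product.

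For the quasi-symmetry property, I would compute both sides using \eqref{b}. The left-hand side is $\langle x^\theta f(x),g(x)\rangle_\theta = \int_{\mathbb{R}} x^{\theta} f(x) W(x) g^{\top}(x^{\theta})\,dx$, where $x^\theta$ is a scalar and hence commutes freely with the matrix-valued factors. The right-hand side is $\langle f(x), x g(x)\rangle_\theta = \int_{\mathbb{R}} f(x) W(x)\, (x^{\theta} g(x^{\theta}))^{\top}\,dx = \int_{\mathbb{R}} f(x) W(x)\, (x^{\theta})^{\theta}\,\text{(wait: careful)}$; more precisely, in the second slot one substitutes $x\mapsto x^{\theta}$ into the argument $xg(x)$, which produces $x^{\theta} g(x^{\theta})$, and its transpose is $(x^{\theta})\, g^{\top}(x^{\theta})$ since $x^{\theta}$ is a scalar. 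Thus the right-hand side equals $\int_{\mathbb{R}} x^{\theta} f(x) W(x) g^{\top}(x^{\theta})\,dx$, which matches the left-hand side. The restriction $\theta\in\mathbb{Z}_+$ is what guarantees $x^{\theta}$ is a genuine polynomial so that $x^\theta f(x)\in\mathbb{R}^{p\times p}[x]$ and the bilinear form is well-defined on it; for non-integer $\theta$ the left slot would leave the polynomial ring.

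The main (minor) obstacle is purely notational: one must be scrupulous about the fact that the transpose in the second argument reverses products, so left-multiplication in the second slot becomes right-multiplication by the transpose after applying $\langle\cdot,\cdot\rangle_\theta$, and about the substitution rule for the deformed argument (the $\theta$-power acts on the variable inside $g$, not by composing $g$ with $x^\theta$ twice). Once these conventions are fixed, both parts are immediate. Since this is quoted verbatim from \cite[Proposition 2.1]{gilson23}, I would keep the verification brief.
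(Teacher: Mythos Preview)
Your proposal is correct and follows the only natural route: direct substitution into the definition \eqref{b}, pulling constant matrices through the integral on the appropriate side, and using that $x^\theta$ is a scalar commuting with all matrix factors. The paper itself does not supply a proof here but simply cites \cite[Proposition 2.1]{gilson23}, so your verification is exactly what would be expected if one were to spell it out; your self-correction on the substitution $x\mapsto x^\theta$ in the second slot is also handled correctly.
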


\subsection{Matrix-valued $\theta$-deformed bi-orthogonal polynomials}
Now we could define a bi-orthogonal basis in $\mathbb{R}^{p \times p}[x]$ in terms of the bilinear form \eqref{b}.
\begin{definition}
Let $\{P_n(x)\}_{n\in\mathbb{N}}$ and $\{Q_n(x)\}_{n\in\mathbb{N}}$ be two sequences of matrix-valued orthogonal polynomials, whose coefficients are in the ring $\mathbb{R}^{p \times p}$.
We call $\{P_n(x)\}_{n\in\mathbb{N}}$ and $\{Q_n(x)\}_{n\in\mathbb{N}}$ are bi-orthogonal with each other under the bilinear form \eqref{b}, if they satisfy
\begin{align*}
\langle P_n(x),Q_m(x)\rangle_\theta=H_n\delta_{n,m},
\end{align*}
where deg $P_n=n$ and deg $Q_m=m$, and $H_n$ is a proper normalization factor.
\end{definition}
The existence of these bi-orthogonal polynomials depends on the moment sequence
\begin{align}\label{moments}
m_{i+j\theta}=\langle x^i\mathbb{I}_p,x^j\mathbb{I}_p\rangle_\theta=\int_{\mathbb{R}}x^{i+j\theta}W(x)dx,\qquad i,j=0,1,\cdots.
\end{align}
If we require conditions
\begin{enumerate}
\item all moments $\{m_{i+j\theta}\}_{i,j=0,1,\cdots}$ exist and are finite,
\item corresponding moment matrices $\left(
m_{i+j\theta}
\right)_{i,j=0,1,\cdots}$ are invertible,
\end{enumerate} then matrix-valued $\theta$-deformed bi-orthogonal polynomials $\{P_n(x)\}_{n\in\mathbb{N}}$ and $\{Q_n(x)\}_{n\in\mathbb{N}}$ exist and uniquely determined. This is called the moment condition for this $\theta$-deformed bilinear form.

Obviously, the bi-orthogonality doesn't break if we consider an invertible transformation
\begin{align*}
P_n(x)\to A_n \tilde{P}_n(x),\quad Q_n(x)\to B_n\tilde{Q}_n(x), \quad A_n,\,B_n\in \text{GL}_p(\mathbb{R}).
\end{align*}
Therefore, we could take representative of these equivalence classes, and assume that $\{P_n(x)\}_{n\in\mathbb{N}}$ and $\{Q_n(x)\}_{n\in\mathbb{N}}$ should be monic, i.e. the coefficients of the highest order are $\mathbb{I}_p$.
In this case, $H_n$ is also uniquely determined, and we have the following proposition.
\begin{proposition}
If $\{P_n(x)\}_{n\in\mathbb{N}}$ and $\{Q_n(x)\}_{n\in\mathbb{N}}$ are monic matrix-valued polynomials, biorthogonal with each other under the $\theta$-deformed bilinear form \eqref{b}, then they have the following quasi-determinant expressions\footnote{Some basic notations for quasi-determinant are given in Appendix.
}
\begin{align}\label{pq}
\begin{aligned}
P_{n}(x)=\left|\begin{array}{ccccc}
m_{0}&m_{\theta}&\cdots&m_{(n-1)\theta}&\mathbb{I}_{p}\\
m_{1}&m_{\theta+1}&\cdots&m_{(n-1)\theta+1}&x\mathbb{I}_{p}\\
\vdots&\vdots&&\vdots&\vdots\\
m_{n}&m_{\theta+n}&\cdots&m_{(n-1)\theta+n}&\fbox{$x^{n}\mathbb{I}_{p}$}
\end{array}\right|, \\
{Q^\top_n(x)}=\left|\begin{array}{cccc}
m_{0}&\cdots&m_{(n-1)\theta}&m_{n\theta}\\
\vdots& &\vdots&\vdots\\
m_{n-1}&\cdots&m_{n-1+(n-1)\theta}&m_{n-1+n\theta}\\
\mathbb{I}_p&\cdots&x^{n-1}\mathbb{I}_p&\boxed{x^n\mathbb{I}_p}
\end{array}
\right|,
\end{aligned}
\end{align}
and the normalization factor $H_n$ is given by
\begin{align*}
H_{n}=\left|\begin{array}{ccccc}
m_{0}&m_{\theta}&\cdots&m_{(n-1)\theta}&m_{n\theta}\\
m_{1}&m_{\theta+1}&\cdots&m_{(n-1)\theta+1}&m_{n\theta+1}\\
\vdots&\vdots&&\vdots&\vdots\\
m_{n}&m_{\theta+n}&\cdots&m_{(n-1)\theta+n}&\fbox{$m_{n\theta+n}$}
\end{array}\right|.
\end{align*}
\end{proposition}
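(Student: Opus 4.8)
The plan is to reduce the defining bi-orthogonality relations to linear systems over the non-commutative ring $\mathbb{R}^{p\times p}$, and then to identify the quasi-determinants in \eqref{pq} as the solutions produced by the non-commutative analogue of Cramer's rule.

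First I would substitute the monic ansatz $P_n(x)=\sum_{k=0}^{n}a_{n,k}x^k$, with $a_{n,k}\in\mathbb{R}^{p\times p}$ and $a_{n,n}=\mathbb{I}_p$. Since $\deg Q_m=m$ and the leading coefficient of each $Q_m$ lies in $\mathrm{GL}_p(\mathbb{R})$, the family $\{Q_0(x),\dots,Q_{n-1}(x)\}$ is obtained from $\{\mathbb{I}_p,x\mathbb{I}_p,\dots,x^{n-1}\mathbb{I}_p\}$ by a block lower-triangular, hence invertible, change of basis; therefore the conditions $\langle P_n,Q_m\rangle_\theta=0$ for $m=0,\dots,n-1$ are equivalent to $\langle P_n(x),x^j\mathbb{I}_p\rangle_\theta=0$ for $j=0,\dots,n-1$. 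Using the left part of the bimodule structure \eqref{bimodule} together with the definition \eqref{moments} of the moments, these become
$$\sum_{k=0}^{n}a_{n,k}\,m_{k+j\theta}=0,\qquad j=0,1,\dots,n-1,$$
to be solved with the normalization $a_{n,n}=\mathbb{I}_p$. Under the moment condition the block matrix $(m_{k+j\theta})_{j,k=0}^{n-1}$ is invertible, so this system has a unique solution; expanding the quasi-determinant for $P_n(x)$ in \eqref{pq} along its boxed last entry, by the expansion rule recalled in the Appendix, and feeding the result back into the system shows that it reproduces exactly this solution and that the leading term is $x^n\mathbb{I}_p$, so the polynomial is indeed monic of degree $n$.

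Next I would treat $Q_n$ in the same spirit. Writing $Q_n(x)=\sum_{k=0}^{n}b_{n,k}x^k$ with $b_{n,n}=\mathbb{I}_p$, bi-orthogonality $\langle P_m,Q_n\rangle_\theta=0$ for $m<n$ is equivalent to $\langle x^i\mathbb{I}_p,Q_n(x)\rangle_\theta=0$ for $i=0,\dots,n-1$; now the right part of \eqref{bimodule} introduces a transpose, so the conditions read $\sum_{k=0}^{n}m_{i+k\theta}\,b_{n,k}^{\top}=0$. This is a system in which the unknown matrices multiply on the right, and its solution is given by a quasi-determinant with the indeterminate entries placed in the last row, which is precisely the displayed expression for $Q_n^{\top}(x)$; transposing recovers $Q_n(x)$. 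For the normalization factor, since $Q_n$ is monic of degree $n$ and its lower-order part lies in $\mathrm{span}\{Q_0,\dots,Q_{n-1}\}$, bi-orthogonality yields $H_n=\langle P_n(x),Q_n(x)\rangle_\theta=\langle P_n(x),x^n\mathbb{I}_p\rangle_\theta=\sum_{k=0}^{n}a_{n,k}\,m_{k+n\theta}$; substituting the coefficients $a_{n,k}$ found above and comparing with the quasi-determinant expansion of the displayed $H_n$ — the same $(m_{k+j\theta})$ block, now bordered by the moments $m_{k+n\theta}$ and $m_{n\theta+j}$ — completes the identification.

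The index bookkeeping in these linear systems is routine; the step that needs care is keeping the non-commutativity straight. In particular, the right-module action in \eqref{bimodule} forces transposes, which is the reason $Q_n$ enters naturally through $Q_n^{\top}$ and why the order of the matrix factors in each quasi-determinant expansion has to match the side on which the unknown coefficients sit. One also relies on the moment condition both to ensure that the bordered block matrices are invertible, so that the quasi-determinants in \eqref{pq} are well defined, and to guarantee uniqueness of the monic bi-orthogonal polynomials and of $H_n$.
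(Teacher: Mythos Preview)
Your proposal is correct and is precisely the ``direct calculation'' the paper leaves implicit: the paper states this proposition without proof (and repeats the analogous formulas for the adjacent families in Section~\ref{sec.3} with only the remark ``by a direct calculation''), relying on the quasi-determinant Cramer rule recorded in the Appendix. Your reduction of the bi-orthogonality conditions to the block linear systems $\sum_{k}a_{n,k}m_{k+j\theta}=0$ (left action) and $\sum_{k}m_{i+k\theta}b_{n,k}^{\top}=0$ (right action), followed by identification with the Schur-complement expansion of the displayed quasi-determinants, is exactly that calculation carried out in full.
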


\section{Adjacent families, discrete spectral transformations and discrete non-commutative hungry Toda lattice}\label{sec.3}

In this section, we introduce the concept of adjacent families of matrix-valued $\theta$-deformed bi-orthogonal polynomials. Adjacent families of orthogonal polynomials are important in the characterization of discrete spectral transformations \cite{brezinski02}, which are used to design for qd-algorithm due to Rutishauser \cite{rutishauser57} and LR transformation for the computation of the eigenvalues of a matrix.
By using adjacent families, we derive non-commutative discrete integrable systems from spectral transformations. In commutative case, such integrable systems were called hungry extensions of the discrete Toda equations \cite{Nakamura15}.
\subsection{Adjacent families of matrix-valued $\theta$-deformed bi-orthogonal polynomials}

For any $\alpha\in\mathbb{Z}$, we could introduce an adjacent families of $\theta$-deformed bilinear form
\begin{align}\label{72}
\langle f(x),g(x)\rangle_{\theta}^{\alpha}=\int_{\mathbb{R}}f(x)W^{(\alpha)}(x)g^{\top}(x^{\theta})dx,\quad W^{(\alpha)}(x)=x^{\alpha}W(x).
\end{align}
Obviously, this bilinear form inherits the bimodule property \eqref{bimodule} and quasi-symmetry property \eqref{qs}.
Moreover, from the bilinear form (\ref{72}), we can induce a family of monic bi-orthogonal polynomial sequences $\{P_{n}^{(\alpha)}(x), Q_{m}^{(\alpha)}(x)\}_{n,m\in\mathbb{N}} $ such that
\begin{align}\label{21}
\langle P_{n}^{(\alpha)}(x),Q_{m}^{(\alpha)}(x)\rangle_{\theta}^{\alpha}=H_{n}^{(\alpha)}\delta_{n,m},
\end{align}
where $H_{n}^{(\alpha)}\in\text{GL}_p(\mathbb{R})$ is a nonsingular normalization factor.
In fact, the bi-orthogonality in \eqref{21} could be equivalently written as
\begin{align}\label{biorthogonality}
\begin{aligned}
&\langle P_n^{(\alpha)}(x),x^i\mathbb{I}_p\rangle_\theta^\alpha=0,\quad 0\leq i\leq n-1,\\
&\langle P_n^{(\alpha)}(x),x^n\mathbb{I}_p\rangle_\theta^\alpha=H_n^{(\alpha)}.
\end{aligned}
\end{align}
By a direct calculation, we can get quasi-determinant expressions for $P_{n}^{(\alpha)}(x)$ and $Q_{n}^{(\alpha)}(x)$, where
\begin{align}
P_{n}^{(\alpha)}(x)=\left|\begin{array}{ccccc}
m_{\alpha}&m_{\alpha+\theta}&\cdots&m_{\alpha+(n-1)\theta}&\mathbb{I}_{p}\\
m_{\alpha+1}&m_{\alpha+\theta+1}&\cdots&m_{\alpha+(n-1)\theta+1}&x\mathbb{I}_{p}\\
\vdots&\vdots&&\vdots&\vdots\\
m_{\alpha+n}&m_{\alpha+\theta+n}&\cdots&m_{\alpha+(n-1)\theta+n}&\fbox{$x^{n}\mathbb{I}_{p}$}
\end{array}\right|,
\end{align}
and
\begin{align}
(Q_{n}^{(\alpha)}(x))^{\top}=\left|\begin{array}{ccccc}
m_{\alpha}&m_{\alpha+\theta}&\cdots&m_{\alpha+n\theta}\\
m_{\alpha+1}&m_{\alpha+\theta+1}&\cdots&m_{\alpha+n\theta+1}\\
\vdots&\vdots&&\vdots\\
m_{\alpha+n-1}&m_{\alpha+\theta+n-1}&\cdots&m_{\alpha+n\theta+n-1}\\
\mathbb{I}_{p}&x\mathbb{I}_{p}&\cdots&\fbox{$x^{n}\mathbb{I}_{p}$}
\end{array}\right|.
\end{align}
We call $\{P_{n}^{(\alpha)}(x)\}_{n\in\mathbb{N}}$ and $\{Q_{n}^{(\alpha)}(x)\}_{n\in\mathbb{N}}$ the $\alpha$-th adjacent family of $\{P_{n}(x)\}_{n\in\mathbb{N}}$ and $\{Q_{n}(x)\}_{n\in\mathbb{N}}$ respectively.

Furthermore, if we put the quasi-determinant expression of $P_{n}^{(\alpha)}(x)$ and $Q_{n}^{(\alpha)}(x)$ into orthogonal relation (\ref{21}), then the normalization factor $H^{(\alpha)}_{n}$ can be expressed in terms of Hankel quasi-determinant and
\begin{align}
H_{n}^{(\alpha)}=\left|\begin{array}{ccccc}
m_{\alpha}&m_{\alpha+\theta}&\cdots&m_{\alpha+(n-1)\theta}&m_{\alpha+n\theta}\\
m_{\alpha+1}&m_{\alpha+\theta+1}&\cdots&m_{\alpha+(n-1)\theta+1}&m_{\alpha+n\theta+1}\\
\vdots&\vdots&&\vdots&\vdots\\
m_{\alpha+n}&m_{\alpha+\theta+n}&\cdots&m_{\alpha+(n-1)\theta+n}&\fbox{$m_{\alpha+n\theta+n}$}
\end{array}\right|.
\end{align}

A significant property for matrix-valued $\theta$-deformed bi-orthogonal polynomials is the recurrence relation. Regarding with adjacent families, we have the following proposition.
\begin{proposition}
There exists the following recurrence relation for $\{P_n^{(\alpha)}(x)\}_{n\in\mathbb{N},\alpha\in\mathbb{Z}}$ such that
\begin{align}\label{recurrencep}
x^\theta P^{(\alpha)}_n(x)=P^{(\alpha)}_{n+\theta}(x)+\sum_{j=n-1}^{n+\theta-1}\kappa_{n,j}^{(\alpha)} P_j^{(\alpha)}(x),
\end{align}
where
\begin{align*}
\kappa_{n,j}^{(\alpha)}=\langle P_n^{(\alpha)}(x),xQ_{j}^{(\alpha)}(x)\rangle_\theta^{\alpha}\cdot \left(H_j^{(\alpha)}\right)^{-1}.
\end{align*}
Besides, the recurrence relation for $\{Q_n^{(\alpha)}(x)\}_{n\in\mathbb{N},\alpha\in\mathbb{Z}}$ is given by
\begin{align}\label{recurrenceq}
xQ_n^{(\alpha)}(x)=Q_{n+1}^{(\alpha)}(x)+\sum_{j=n-\theta}^n \ell_{n,j}^{(\alpha)}Q_j^{(\alpha)}(x),
\end{align}
where
\begin{align*}
\left(\ell_{n,j}^{(\alpha)}\right)^\top=\langle P_j^{(\alpha)}(x),xQ_n^{(\alpha)}(x)\rangle_\theta^{\alpha}\left(H_j^{(\alpha)}\right)^{-1}.
\end{align*}
\end{proposition}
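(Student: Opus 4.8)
The plan is to establish both recurrences by the same two-step scheme: first expand the shifted polynomial in the relevant monic bi-orthogonal basis, and then pin down the expansion coefficients together with their range by combining the bi-orthogonality \eqref{21} with the quasi-symmetry property \eqref{qs}, which the adjacent bilinear form \eqref{72} inherits from \eqref{b}.

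For \eqref{recurrencep}, note that $x^\theta P_n^{(\alpha)}(x)$ is a monic matrix-valued polynomial of degree $n+\theta$. Since each $P_j^{(\alpha)}$ is monic of degree $j$, the family $\{P_j^{(\alpha)}(x)\}_{j\ge 0}$ is a free left $\mathbb{R}^{p\times p}$-module basis of $\mathbb{R}^{p\times p}[x]$ (strip off leading coefficients one degree at a time), so there is a unique expansion $x^\theta P_n^{(\alpha)}(x)=\sum_{j=0}^{n+\theta}\kappa_{n,j}^{(\alpha)}P_j^{(\alpha)}(x)$, and comparing leading coefficients gives $\kappa_{n,n+\theta}^{(\alpha)}=\mathbb{I}_p$. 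Pairing both sides on the right with $Q_j^{(\alpha)}(x)$ in $\langle\cdot,\cdot\rangle_\theta^\alpha$ and using the left bimodule identity in \eqref{bimodule} together with \eqref{21} isolates $\kappa_{n,j}^{(\alpha)}H_j^{(\alpha)}=\langle x^\theta P_n^{(\alpha)}(x),Q_j^{(\alpha)}(x)\rangle_\theta^\alpha$; then \eqref{qs} rewrites the right-hand side as $\langle P_n^{(\alpha)}(x),xQ_j^{(\alpha)}(x)\rangle_\theta^\alpha$, which is the stated formula for $\kappa_{n,j}^{(\alpha)}$. Finally, $\deg\big(xQ_j^{(\alpha)}\big)=j+1$, whereas $\langle P_n^{(\alpha)}(x),g(x)\rangle_\theta^\alpha=0$ for every $g$ with $\deg g\le n-1$ (this follows from \eqref{biorthogonality} and the right bimodule identity, since $x$ is central), so $\kappa_{n,j}^{(\alpha)}=0$ whenever $j+1\le n-1$; combined with $\kappa_{n,n+\theta}^{(\alpha)}=\mathbb{I}_p$ this truncates the sum to $n-1\le j\le n+\theta-1$.

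The proof of \eqref{recurrenceq} is parallel. Now $xQ_n^{(\alpha)}(x)$ is monic of degree $n+1$; expand it in the free left-module basis $\{Q_j^{(\alpha)}(x)\}_{j\ge 0}$ as $xQ_n^{(\alpha)}(x)=Q_{n+1}^{(\alpha)}(x)+\sum_{j=0}^{n}\ell_{n,j}^{(\alpha)}Q_j^{(\alpha)}(x)$. Pairing on the left with $P_j^{(\alpha)}(x)$ and using the right bimodule identity in \eqref{bimodule} together with \eqref{21} expresses $\ell_{n,j}^{(\alpha)}$ through $\langle P_j^{(\alpha)}(x),xQ_n^{(\alpha)}(x)\rangle_\theta^\alpha$ and $\big(H_j^{(\alpha)}\big)^{-1}$, which is the asserted formula. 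For the lower cut-off one uses \eqref{qs} in the reverse direction: $\langle P_j^{(\alpha)}(x),xQ_n^{(\alpha)}(x)\rangle_\theta^\alpha=\langle x^\theta P_j^{(\alpha)}(x),Q_n^{(\alpha)}(x)\rangle_\theta^\alpha$, and this vanishes once $\deg\big(x^\theta P_j^{(\alpha)}\big)=j+\theta\le n-1$, because $\langle g(x),Q_n^{(\alpha)}(x)\rangle_\theta^\alpha=0$ for every $g$ with $\deg g\le n-1$ (from \eqref{21}, since $\{P_k^{(\alpha)}\}_{k\le n-1}$ spans the polynomials of degree $\le n-1$). Hence $\ell_{n,j}^{(\alpha)}=0$ for $j\le n-\theta-1$, and the surviving range is $n-\theta\le j\le n$ (reading $Q_j\equiv 0$ for $j<0$).

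The individual computations are routine; the care needed is entirely in the non-commutative bookkeeping — which side the matrix coefficients sit on, which half of the bimodule identity \eqref{bimodule} to invoke when extracting a coefficient, and on which side $H_j^{(\alpha)}$ gets inverted. The one genuinely load-bearing step is the passage via \eqref{qs} between $\langle P,xQ\rangle_\theta^\alpha$ and $\langle x^\theta P,Q\rangle_\theta^\alpha$: it is precisely this asymmetric shift — degree $1$ on one side versus degree $\theta$ on the other — that turns the usual tridiagonal band of ordinary bi-orthogonal polynomials into the ``hungry'' bands $[n-1,n+\theta-1]$ and $[n-\theta,n]$, and it is also the place where the hypothesis $\theta\in\mathbb{Z}_+$ is used.
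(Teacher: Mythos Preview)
Your argument is correct and is the natural route: expand the shifted polynomial in the monic bi-orthogonal basis, isolate each coefficient by pairing with the dual family via the appropriate half of the bimodule identity \eqref{bimodule}, and then invoke the quasi-symmetry \eqref{qs} to transfer the $x$-factor across and read off the band. The paper states this proposition without proof, so there is nothing to compare against directly; your write-up is precisely the expand-and-pair mechanism the authors use elsewhere (see the proof of the Geronimus transformation for $\{P_n^{(\alpha)}\}$ immediately afterward).

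One small point on the non-commutative bookkeeping you rightly flag: when you pair $P_j^{(\alpha)}$ on the left against $\sum_k \ell_{n,k}^{(\alpha)} Q_k^{(\alpha)}$ using the right bimodule identity, the computation actually gives $\langle P_j^{(\alpha)}, xQ_n^{(\alpha)} \rangle_\theta^\alpha = H_j^{(\alpha)}\,(\ell_{n,j}^{(\alpha)})^\top$, so that $(\ell_{n,j}^{(\alpha)})^\top = (H_j^{(\alpha)})^{-1}\langle P_j^{(\alpha)}, xQ_n^{(\alpha)} \rangle_\theta^\alpha$ with the inverse on the \emph{left}. The proposition as displayed has it on the right; this looks like a slip in the paper rather than in your reasoning (the paper's own formulas for $(q_{n+1}^{(\alpha)})^\top$ and $(e_n^{(\alpha)})^\top$ in the next subsection place $(H_j)^{-1}$ on the left, consistent with what your method yields). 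Your phrase ``which is the asserted formula'' glosses over this discrepancy, but the method, the vanishing argument, and the resulting band $[n-\theta,n]$ are unaffected.
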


\subsection{Spectral transformations for $\{P_n^{(\alpha)}(x)\}_{n\in\mathbb{N},\alpha\in\mathbb{Z}}$ and discrete non-commutative hungry Toda-I lattice}
In this part, we first show spectral transformations, including the Christoffel transformation and the Geronimus transformation, for $\{P_n^{(\alpha)}(x)\}_{n\in\mathbb{N},\alpha\in\mathbb{Z}}$. Besides, we demonstrate that the discrete non-commutative hungry Toda-I lattice could be derived from the compatibility condition of these spectral transformations. In the following theorem, we show the Christoffel transformation for adjacent families of $\{P_n^{(\alpha)}(x)\}_{n\in\mathbb{N},\alpha\in\mathbb{Z}}$.

\begin{theorem}\label{3}
The Christoffel transformation of adjacent families of matrix-valued $\theta$-deformed bi-orthogonal polynomials $\{P_{n}^{(\alpha)}(x)\}_{n\in\mathbb{N},\alpha\in\mathbb{Z}}$ is given by
\begin{align}\label{1p}
xP_{n}^{(\alpha+1)}(x)=P_{n+1}^{(\alpha)}(x)+\omega_{n+1}^{(\alpha)}P_{n}^{(\alpha)}(x),
\end{align}
where
$
\omega_{n+1}^{(\alpha)}=H_{n}^{(\alpha+1)}(H_{n}^{(\alpha)})^{-1}.
$
\end{theorem}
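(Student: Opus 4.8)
The plan is to verify the Christoffel transformation by checking that the right-hand side, call it $R_n^{(\alpha)}(x):=P_{n+1}^{(\alpha)}(x)+\omega_{n+1}^{(\alpha)}P_n^{(\alpha)}(x)$, equals $xP_n^{(\alpha+1)}(x)$ using the characterizing bi-orthogonality relations \eqref{biorthogonality}. First I would observe that both sides are matrix-valued polynomials of degree $n+1$ with leading coefficient $\mathbb{I}_p$: indeed $P_{n+1}^{(\alpha)}$ is monic of degree $n+1$, $\omega_{n+1}^{(\alpha)}P_n^{(\alpha)}$ has degree $n$, and $xP_n^{(\alpha+1)}$ is monic of degree $n+1$. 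So it suffices to show that $xP_n^{(\alpha+1)}(x)$ satisfies the defining orthogonality of $P_{n+1}^{(\alpha)}$ modulo the span of $P_n^{(\alpha)}$; equivalently, that $R_n^{(\alpha)}(x)$ is orthogonal to $x^i\mathbb{I}_p$ for $0\le i\le n$ with respect to $\langle\cdot,\cdot\rangle_\theta^{\alpha}$, since a monic degree-$(n+1)$ polynomial with that property is unique and must be $P_{n+1}^{(\alpha)}$. Wait — that would only pin down $P_{n+1}^{(\alpha)}$, so instead I would directly compute $\langle xP_n^{(\alpha+1)}(x),\,x^i\mathbb{I}_p\rangle_\theta^{\alpha}$ for $0\le i\le n$ and match it against the known values of $\langle P_{n+1}^{(\alpha)},x^i\mathbb{I}_p\rangle_\theta^{\alpha}$ and $\langle P_n^{(\alpha)},x^i\mathbb{I}_p\rangle_\theta^{\alpha}$.

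The key computational step uses the weight shift $W^{(\alpha+1)}(x)=xW^{(\alpha)}(x)$, which gives the identity
\begin{align*}
\langle f(x),g(x)\rangle_\theta^{\alpha+1}=\langle xf(x),g(x)\rangle_\theta^{\alpha}
\end{align*}
for any $f,g$, by definition \eqref{72}. Hence for $0\le i\le n-1$,
\begin{align*}
\langle xP_n^{(\alpha+1)}(x),x^i\mathbb{I}_p\rangle_\theta^{\alpha}=\langle P_n^{(\alpha+1)}(x),x^i\mathbb{I}_p\rangle_\theta^{\alpha+1}=0
\end{align*}
by the bi-orthogonality of the $(\alpha+1)$-family, while $\langle P_{n+1}^{(\alpha)},x^i\mathbb{I}_p\rangle_\theta^{\alpha}=0$ and $\langle P_n^{(\alpha)},x^i\mathbb{I}_p\rangle_\theta^{\alpha}=0$ as well for $0\le i\le n-1$. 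So the relation \eqref{1p} holds automatically on the first $n$ test functions, for \emph{any} choice of $\omega_{n+1}^{(\alpha)}$; the single remaining constraint comes from pairing against $x^n\mathbb{I}_p$. There we get $\langle xP_n^{(\alpha+1)},x^n\mathbb{I}_p\rangle_\theta^{\alpha}=\langle P_n^{(\alpha+1)},x^n\mathbb{I}_p\rangle_\theta^{\alpha+1}=H_n^{(\alpha+1)}$, whereas $\langle P_{n+1}^{(\alpha)},x^n\mathbb{I}_p\rangle_\theta^{\alpha}=0$ (degree reasons) and $\langle P_n^{(\alpha)},x^n\mathbb{I}_p\rangle_\theta^{\alpha}=H_n^{(\alpha)}$. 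Matching the left bimodule structure \eqref{bimodule} then forces $H_n^{(\alpha+1)}=\omega_{n+1}^{(\alpha)}H_n^{(\alpha)}$, i.e. $\omega_{n+1}^{(\alpha)}=H_n^{(\alpha+1)}(H_n^{(\alpha)})^{-1}$, which is exactly the asserted formula and is well-defined since $H_n^{(\alpha)}\in\mathrm{GL}_p(\mathbb{R})$.

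The only genuine subtlety — and the step I expect to be the main obstacle — is being careful about the \emph{side} on which matrix coefficients act. Because the bilinear form is only a bimodule (not bilinear over the full matrix ring), $\omega_{n+1}^{(\alpha)}$ must sit on the left of $P_n^{(\alpha)}(x)$, and one must check that the expansion $xP_n^{(\alpha+1)}(x)=\sum_j c_j P_j^{(\alpha)}(x)$ in the monic $\alpha$-basis has all lower coefficients $c_j$ ($j\le n-1$) vanishing; this again follows from the orthogonality computation above, since $c_j$ is recovered (via \eqref{biorthogonality}) as $\langle xP_n^{(\alpha+1)},x^j\mathbb{I}_p\rangle_\theta^{\alpha}(H_j^{(\alpha)})^{-1}$ up to reconciling with the non-commutative expansion, and each such pairing vanishes by the weight-shift identity. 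Once the one-sided bookkeeping is set up correctly, the rest is the short computation sketched above. Alternatively, one could give a purely algebraic proof by substituting the quasi-determinant expressions \eqref{pq} for $P_n^{(\alpha+1)}$, $P_{n+1}^{(\alpha)}$, $P_n^{(\alpha)}$ and using quasi-Plücker/row-expansion identities, but the orthogonality argument is cleaner and avoids heavy quasi-determinant manipulation.
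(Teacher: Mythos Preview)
Your argument is correct, but it takes a genuinely different route from the paper's. The paper proves this Christoffel transformation by substituting the quasi-determinant expression for $P_{n+1}^{(\alpha)}(x)$ and applying the non-commutative Jacobi identity \eqref{51} (expanding on the first and last rows and the last two columns), followed by the homological relations \eqref{qq}, to recognise the three resulting quasi-determinants as $xP_n^{(\alpha+1)}$, $H_n^{(\alpha+1)}(H_n^{(\alpha)})^{-1}$ and $P_n^{(\alpha)}$ respectively. Your proof instead exploits the weight-shift identity $\langle xf,g\rangle_\theta^{\alpha}=\langle f,g\rangle_\theta^{\alpha+1}$ together with the characterising orthogonality relations \eqref{biorthogonality} and the uniqueness of monic bi-orthogonal polynomials under the moment condition. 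This is precisely the style of argument the paper itself uses for the companion Geronimus transformation (the theorem immediately following), so it is a perfectly natural alternative here. The quasi-determinant computation has the virtue of being purely algebraic---it never appeals to uniqueness and produces the coefficient $H_n^{(\alpha+1)}(H_n^{(\alpha)})^{-1}$ directly from the identity---whereas your approach is shorter, more conceptual, and makes the role of the index shift $\alpha\mapsto\alpha+1$ transparent. One small cleanup: to extract the lower coefficients $c_j$ in the expansion of $xP_n^{(\alpha+1)}$ in the $\{P_j^{(\alpha)}\}$ basis, it is cleanest to pair against $Q_j^{(\alpha)}$ rather than $x^j\mathbb{I}_p$ (as the paper does in the Geronimus proof), since then $c_j=\langle xP_n^{(\alpha+1)},Q_j^{(\alpha)}\rangle_\theta^{\alpha}(H_j^{(\alpha)})^{-1}$ exactly; your test-function version works too but, as you note, requires the extra step of invoking invertibility of the full moment matrix.
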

\begin{proof}
In the quasi-determinant expressions of $P_{n+1}^{(\alpha)}(x)$, if we apply the non-commutative version of Jacobi identity (\ref{51}) to the first and $(n+2)$-th rows and the (n+1)-th and $(n+2)$-th columns, then we get
\begin{align*}
&\left|\begin{array}{ccccc}
m_{\alpha}&m_{\alpha+\theta}&\cdots&m_{\alpha+n\theta}&\mathbb{I}_{p}\\
m_{\alpha+1}&m_{\alpha+\theta+1}&\cdots&m_{\alpha+n\theta+1}&x\mathbb{I}_{p}\\
\vdots&\vdots&&\vdots&\vdots\\
m_{\alpha+n+1}&m_{\alpha+\theta+n+1}&\cdots&m_{\alpha+n\theta+n+1}&\fbox{$x^{n+1}\mathbb{I}_{p}$}
\end{array}\right|
=\left|\begin{array}{ccccc}
m_{\alpha+1}&\cdots&m_{\alpha+(n-1)\theta+1}&x\mathbb{I}_{p}\\
m_{\alpha+2}&\cdots&m_{\alpha+(n-1)\theta+2}&x^{2}\mathbb{I}_{p}\\
\vdots&&\vdots&\vdots\\
m_{\alpha+n+1}&\cdots&m_{\alpha+(n-1)\theta+n+1}&\fbox{$x^{n+1}\mathbb{I}_{p}$}
\end{array}\right|
\end{align*}
\begin{align*}
&-\left|\begin{array}{ccccc}
m_{\alpha+1}&\cdots&m_{\alpha+n\theta+1}\\
\vdots&&\vdots\\
m_{\alpha+n+1}&\cdots&\fbox{$m_{\alpha+n\theta+n+1}$}
\end{array}\right|\left|\begin{array}{ccccc}
m_{\alpha}&\cdots&\fbox{$m_{\alpha+n\theta}$}\\
\vdots&&\vdots\\
m_{\alpha+n}&\cdots&m_{\alpha+n\theta+n}
\end{array}\right|^{-1}\left|\begin{array}{ccccc}
m_{\alpha}&\cdots&m_{\alpha+(n-1)\theta}&\fbox{$\mathbb{I}_{p}$}\\
\vdots&&\vdots\\
m_{\alpha+n}&\cdots&m_{\alpha+(n-1)\theta+n}&x^{n}\mathbb{I}_{p}
\end{array}\right|.
\end{align*}
By making use of homological relations (\ref{qq}) to the last two quasi-determinants, we get
\begin{align}
P_{n+1}^{(\alpha)}(x)=xP_{n}^{(\alpha+1)}(x)-H_{n}^{(\alpha+1)}(H_{n}^{(\alpha)})^{-1}P_{n}^{(\alpha)}(x).
\end{align}
So far, we complete the proof of this theorem.
\end{proof}
Besides, a Geronimus transformation for adjacent families could be obtained by simply using the bi-orthogonal relation.
\begin{theorem}
The Geronimus transformation of adjacent families of matrix-valued $\theta$-deformed bi-orthogonal polynomials $\{P_{n}^{(\alpha)}(x)\}_{n\in\mathbb{N}}$ is given by
\begin{align}\label{2p}
P_{n}^{(\alpha)}(x)=P_{n}^{(\alpha+\theta)}(x)+\varepsilon_{n}^{(\alpha)}P_{n-1}^{(\alpha+\theta)}(x),
\end{align}
where
$
\varepsilon_{n}^{(\alpha)}=H_{n}^{(\alpha)}(H_{n-1}^{(\alpha+\theta)})^{-1}.
$
\end{theorem}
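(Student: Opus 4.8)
The plan is to use the bi-orthogonality directly, as the remark preceding the statement suggests, rather than manipulating quasi-determinants as in Theorem~\ref{3}. Since $P_n^{(\alpha)}(x)$ is monic of degree $n$ and $\{P_j^{(\alpha+\theta)}(x)\}_{j=0}^{n}$ is a free left $\mathbb{R}^{p\times p}$-module basis of the matrix polynomials of degree at most $n$ (each $P_j^{(\alpha+\theta)}$ is monic of degree $j$, so the change of basis from $\{x^j\mathbb{I}_p\}_{j=0}^n$ is block upper-triangular with diagonal blocks $\mathbb{I}_p$, hence invertible), there is a unique expansion
\begin{align*}
P_n^{(\alpha)}(x)=P_n^{(\alpha+\theta)}(x)+\sum_{j=0}^{n-1}c_{n,j}^{(\alpha)}P_j^{(\alpha+\theta)}(x),\qquad c_{n,j}^{(\alpha)}\in\mathbb{R}^{p\times p},
\end{align*}
with leading coefficient $\mathbb{I}_p$ forced by monicity. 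It remains only to identify the coefficients $c_{n,j}^{(\alpha)}$.

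First I would relate the two adjacent bilinear forms. Since $x^{\alpha+\theta}=x^{\theta}\cdot x^{\alpha}$ is scalar and commutes with $W(x)$, one has $\langle f,g\rangle_\theta^{\alpha+\theta}=\langle x^\theta f(x),g(x)\rangle_\theta^{\alpha}$, and the quasi-symmetry property \eqref{qs} for the $\alpha$-adjacent form (which that form inherits, for $\theta\in\mathbb{Z}_+$) turns this into $\langle f,g\rangle_\theta^{\alpha+\theta}=\langle f(x),xg(x)\rangle_\theta^{\alpha}$. Hence, using the bi-orthogonality \eqref{biorthogonality} at level $\alpha$,
\begin{align*}
\langle P_n^{(\alpha)}(x),x^i\mathbb{I}_p\rangle_\theta^{\alpha+\theta}=\langle P_n^{(\alpha)}(x),x^{i+1}\mathbb{I}_p\rangle_\theta^{\alpha}=
\begin{cases}
0, & 0\le i\le n-2,\\[2pt]
H_n^{(\alpha)}, & i=n-1.
\end{cases}
\end{align*}

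Next I would pair the expansion with $Q_m^{(\alpha+\theta)}(x)$ under $\langle\cdot,\cdot\rangle_\theta^{\alpha+\theta}$. The left bimodule structure in \eqref{bimodule} together with the orthogonality \eqref{21} collapses the right-hand side to $c_{n,m}^{(\alpha)}H_m^{(\alpha+\theta)}$ for $0\le m\le n-1$. For the left-hand side, writing $Q_m^{(\alpha+\theta)}(x)=\sum_{i=0}^{m}d_{m,i}\,x^i\mathbb{I}_p$ with $d_{m,m}=\mathbb{I}_p$ and using the right bimodule structure, $\langle P_n^{(\alpha)}(x),Q_m^{(\alpha+\theta)}(x)\rangle_\theta^{\alpha+\theta}=\sum_{i=0}^{m}\langle P_n^{(\alpha)}(x),x^i\mathbb{I}_p\rangle_\theta^{\alpha+\theta}\,d_{m,i}^\top$, which by the previous paragraph vanishes for $m\le n-2$ and equals $H_n^{(\alpha)}$ for $m=n-1$. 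Comparing the two sides and using invertibility of $H_m^{(\alpha+\theta)}$ gives $c_{n,m}^{(\alpha)}=0$ for $m\le n-2$ and $c_{n,n-1}^{(\alpha)}=H_n^{(\alpha)}\bigl(H_{n-1}^{(\alpha+\theta)}\bigr)^{-1}=\varepsilon_n^{(\alpha)}$, which is the claimed identity.

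The only delicate point — mild here — is the index bookkeeping: one must check that multiplying the test polynomial by $x$ is precisely what replaces $W^{(\alpha+\theta)}$ by $W^{(\alpha)}$ (this is where $\theta\in\mathbb{Z}_+$ and quasi-symmetry enter), and one must keep the transposes in the right bimodule rule straight when expanding $Q_m^{(\alpha+\theta)}$ in monomials. An alternative, fully parallel to the proof of Theorem~\ref{3}, would apply the non-commutative Jacobi identity to the quasi-determinant for $P_n^{(\alpha)}(x)$, relating columns separated by $\theta$; I would keep the bi-orthogonality argument as the primary one, since it is shorter and makes the normalization factor $\varepsilon_n^{(\alpha)}$ transparent.
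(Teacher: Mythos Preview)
Your proof is correct and follows essentially the same approach as the paper: expand $P_n^{(\alpha)}$ in the basis $\{P_j^{(\alpha+\theta)}\}$, pair with $Q_m^{(\alpha+\theta)}$ under $\langle\cdot,\cdot\rangle_\theta^{\alpha+\theta}$, and use the identity $\langle f,g\rangle_\theta^{\alpha+\theta}=\langle f,xg\rangle_\theta^{\alpha}$ together with bi-orthogonality to isolate the single surviving coefficient. The paper's version is slightly more compressed (it pairs directly with $Q_i^{(\alpha+\theta)}$ rather than first computing the monomial pairings), but the argument is the same.
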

\begin{proof}
As $\{P_n^{(\alpha+\theta)}\}_{n\in\mathbb{N}}$ form a basis in $\mathbb{R}^{p\times p}[x]$, we could expand $P_n^{(\alpha)}(x)$ in terms of $\{P_n^{(\alpha+\theta)}\}_{n\in\mathbb{N}}$, which indicates that
\begin{align}\label{bas}
P_{n}^{(\alpha)}(x)=P_{n}^{(\alpha+\theta)}(x)+\beta_{n,n-1}^{(\alpha)}P_{n-1}^{(\alpha+\theta)}(x)+\cdots+\beta_{n,0}^{(\alpha)}P_{0}^{(\alpha+\theta)}(x).
\end{align}
By acting linear functionals
\begin{align*}
\mathcal{L}_i[\cdot]:=\int_{\mathbb{R}}\cdot W^{(\alpha+\theta)}(x)Q_i^{(\alpha+\theta)}(x)dx,\quad  i=0,1,\cdots,n
\end{align*}
on both sides of \eqref{bas}, and by the bi-orthogonality \eqref{biorthogonality}, one finds that
\[\nonumber
\langle P_{n}^{(\alpha)}(x),Q_{i}^{(\alpha+\theta)}(x)\rangle_{\theta}^{\alpha+\theta}=\langle P_{n}^{(\alpha)}(x),xQ_{i}^{(\alpha+\theta)}(x)\rangle_{\theta}^{\alpha}=0
\]
if $i<n-1$. Moreover, when $i=n-1$, it results in
\[\nonumber
\beta_{n,n-1}^{(\alpha)}=\langle P_{n}^{(\alpha)}(x),xQ_{n-1}^{(\alpha+\theta)}(x)\rangle_{\theta}^{\alpha}\cdot(H_{n-1}^{(\alpha+\theta)})^{-1}=H_{n}^{(\alpha)}(H_{n-1}^{(\alpha+\theta)})^{-1}.
\]
\end{proof}
It should be remarked that Christoffel and Geronimus transformations could also be written as a matrix form. Let's denote
\begin{align*}
\Phi^{(\alpha)}=\left(\begin{array}{c}
P_0^{(\alpha)}(x)\\P_1^{(\alpha)}(x)\\\vdots
\end{array}
\right),
\end{align*}
then the Christoffel transformation (\ref{1p}) could be written as
\begin{align}\label{i}
x\Phi^{(\alpha+1)}=\mathscr{A}^{(\alpha)}\Phi^{(\alpha)},\quad \mathscr{A}^{(\alpha)}=\left(\begin{array}{cccc}
\omega_{1}^{(\alpha)}&\mathbb{I}_{p}&&\\
&\omega_{2}^{(\alpha)}&\mathbb{I}_{p}&\\
&&\ddots&\ddots
\end{array}\right),
\end{align}
and the Geronimus transformation (\ref{2p}) could be written as
\begin{align}\label{j}
\Phi^{(\alpha)}=\mathscr{B}^{(\alpha)}\Phi^{(\alpha+\theta)},\quad  \mathscr{B}^{(\alpha)}=\left(\begin{array}{cccc}
\mathbb{I}_{p}&&&\\
\varepsilon_{1}^{(\alpha)}&\mathbb{I}_{p}&&\\
&\varepsilon_{2}^{(\alpha)}&\mathbb{I}_{p}&\\
&&\ddots&\ddots
\end{array}\right).
\end{align}
Therefore, the compatibility condition of (\ref{i}) and (\ref{j}) can result in
\begin{align}
\mathscr{A}^{(\alpha)}\mathscr{B}^{(\alpha)}=\mathscr{B}^{(\alpha+1)}\mathscr{A}^{(\alpha+\theta)}.
\end{align}
This is equivalent to the nonlinear equation
\begin{align}\label{3q}
\begin{aligned}
\omega_{n+1}^{(\alpha)}&\varepsilon_{n}^{(\alpha)}=\varepsilon_{n}^{(\alpha+1)}\omega_{n}^{(\alpha+\theta)},\\
\omega_{n}^{(\alpha+\theta)}&+\varepsilon_{n-1}^{(\alpha+1)}=\omega_{n}^{(\alpha)}+\varepsilon_{n}^{(\alpha)},
\end{aligned}
\end{align}
which is a fully discrete non-commutative hungry Toda-I lattice.
\begin{remark}
The equation \eqref{3q} could be expressed in terms of ``non-commutative $\tau$-function'' $\{H_n^{(\alpha)}\}_{n\in\mathbb{N},\alpha\in\mathbb{Z}}$, and we have
\begin{align}\label{be}
H_{n+1}^{(\alpha)}=H_{n}^{(\alpha+\theta+1)}+H_{n}^{(\alpha+\theta)}\((H_{n-1}^{\alpha+\theta+1})^{-1}-(H_{n}^{\alpha})^{-1}\)H_{n}^{(\alpha+1)},
\end{align}
\end{remark}

\subsection{Spectral transformations for $\{Q_n^{(\alpha)}(x)\}_{n\in\mathbb{N},\alpha\in\mathbb{Z}}$ and discrete non-commutative hungry Toda-II lattice}
In this part, we pay our attention to spectral transformations for adjacent families $\{Q_n^{(\alpha)}(x)\}_{n\in\mathbb{N},\alpha\in\mathbb{Z}}$, and obtain the discrete non-commutative hungry Toda-II lattice from compatibility conditions. Since the proof process is similar to those of $\{P_n^{(\alpha)}(x)\}_{n\in\mathbb{N},\alpha\in\mathbb{Z}}$, we omit the details here.
\begin{theorem}
The Christoffel transformation of adjacent families for matrix-valued $\theta$-deformed bi-orthogonal polynomials $\{Q_{n}^{(\alpha)}(x)\}_{n\in\mathbb{N},\alpha\in\mathbb{Z}}$ is given by
\begin{align}
xQ_{n}^{(\alpha+\theta)}(x)=Q_{n+1}^{(\alpha)}(x)+q_{n+1}^{(\alpha)}Q_{n}^{(\alpha)}(x),\label{h}
\end{align}
where
$(q_{n+1}^{(\alpha)})^{\top}=(H_{n}^{(\alpha)})^{-1}H_{n}^{(\alpha+\theta)}.$
\end{theorem}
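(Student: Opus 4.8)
The plan is to mirror the proof of Theorem~\ref{3}, but applied to the quasi-determinant expression of $Q_{n+1}^{(\alpha)}(x)$ instead of $P_{n+1}^{(\alpha)}(x)$. The statement \eqref{h} asserts that the Christoffel transformation in the $\alpha$-variable for the second family raises the index by $\theta$ rather than by $1$; this reflects the asymmetric roles of the two arguments of the $\theta$-deformed bilinear form, so one should expect the shift $\alpha \to \alpha+\theta$ to appear naturally once the column structure of $(Q^\top_{n})$ is examined. First I would write out $(Q_{n+1}^{(\alpha)}(x))^\top$ as the $(n+2)\times(n+2)$ Hankel-type quasi-determinant with bordering row $(\mathbb{I}_p, x\mathbb{I}_p, \dots, x^{n+1}\mathbb{I}_p)$ and column entries built from $m_{\alpha+k\theta+j}$.

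Next I would apply the non-commutative Jacobi identity \eqref{51} to an appropriate pair of rows and columns. For the $P$-case the identity was applied to the first and last rows together with the last two columns; here, because the polynomial variable sits in the bottom \emph{row} of $(Q^\top_n)$, the natural choice is to expand along the first and last \emph{columns} and the last two rows, so that one of the resulting sub-quasi-determinants reproduces $(Q_{n}^{(\alpha+\theta)}(x))^\top$ up to multiplication by $x$ (the extra factor of $x$ arising from the shift of the bordering row), another reproduces $(Q_{n}^{(\alpha)}(x))^\top$, and the Hankel blocks collapse to $H_n^{(\alpha)}$ and $H_n^{(\alpha+\theta)}$. I would then use the homological relations \eqref{qq} to put the scalar prefactors into the correct left/right position, transpose the whole identity, and read off \eqref{h} with $(q_{n+1}^{(\alpha)})^\top = (H_n^{(\alpha)})^{-1} H_n^{(\alpha+\theta)}$.

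As an alternative, cleaner route I would instead derive \eqref{h} purely from bi-orthogonality, in the spirit of the Geronimus proof: since $\{Q_n^{(\alpha)}(x)\}_{n\in\mathbb{N}}$ is a basis of $\mathbb{R}^{p\times p}[x]$, expand $xQ_n^{(\alpha+\theta)}(x)$ in that basis, then pair against $P_j^{(\alpha)}(x)$ under $\langle\cdot,\cdot\rangle_\theta^{\alpha}$ and invoke the quasi-symmetry property \eqref{qs} to convert $\langle P_j^{(\alpha)}(x), x Q_n^{(\alpha+\theta)}(x)\rangle_\theta^{\alpha}$ into a pairing at level $\alpha+\theta$, i.e. $\langle P_j^{(\alpha)}(x), Q_n^{(\alpha+\theta)}(x)\rangle_\theta^{\alpha+\theta}$ combined with a degree count; this kills all coefficients except the top two and identifies the surviving one as $(H_n^{(\alpha)})^{-1}H_n^{(\alpha+\theta)}$ after transposition. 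This second approach avoids quasi-determinant bookkeeping entirely and is the one I would write up.

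The main obstacle in either route is keeping the non-commutativity straight: the bimodule structure \eqref{bimodule} puts the right-hand coefficients through a transpose, so one must be careful that the coefficient multiplying $Q_n^{(\alpha)}(x)$ ends up as $q_{n+1}^{(\alpha)}$ acting on the left with $(q_{n+1}^{(\alpha)})^\top$ — not $q_{n+1}^{(\alpha)}$ itself — appearing when the pairing is computed, and that the shift $\alpha+\theta$ (not $\alpha+1$) is forced by the $x^\theta$ in \eqref{qs} rather than inserted by hand. Getting the transposes and the index shift consistent is the only real content; the rest is the same template as the previous two theorems, which is why the paper defers the details.
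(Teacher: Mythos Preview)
Your proposal is correct. The paper itself omits the proof entirely, saying only that it is ``similar to those of $\{P_n^{(\alpha)}(x)\}$'', which points to your first route: apply the non-commutative Jacobi identity \eqref{51} to $(Q_{n+1}^{(\alpha)})^\top$ with the roles of rows and columns swapped relative to Theorem~\ref{3}, then use the homological relations \eqref{qq} and transpose. Your identification of the correct rows/columns (first and last columns, last two rows) and of the mechanism producing the shift $\alpha\to\alpha+\theta$ (deletion of the first column of moments) is right, and your caveat about keeping left/right factors and transposes straight is exactly the only delicate point.

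Your second route, via bi-orthogonality and the quasi-symmetry \eqref{qs}, is a genuinely different argument from what the paper gestures at: it transplants the template of the Geronimus proof to the Christoffel case. It works cleanly---expand $xQ_n^{(\alpha+\theta)}$ in the basis $\{Q_j^{(\alpha)}\}$, pair with $P_j^{(\alpha)}$ under $\langle\cdot,\cdot\rangle_\theta^\alpha$, use $\langle P_j^{(\alpha)},xQ_n^{(\alpha+\theta)}\rangle_\theta^\alpha=\langle P_j^{(\alpha)},Q_n^{(\alpha+\theta)}\rangle_\theta^{\alpha+\theta}$, and read off the single surviving coefficient. This avoids all quasi-determinant bookkeeping and makes the origin of the $\theta$-shift completely transparent; the trade-off is that it does not display the explicit quasi-determinant identity underlying the transformation, which the paper's approach does (implicitly). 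Either write-up would be acceptable.
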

\begin{theorem}
The Geronimus transformation for adjacent families of matrix-valued $\theta$-deformed bi-orthogonal polynomials $\{Q_{n}^{(\alpha)}(x)\}_{n\in\mathbb{N},\alpha\in\mathbb{Z}}$ is given by
\begin{align}
Q_{n}^{(\alpha)}(x)=Q_{n}^{(\alpha+1)}(x)+e_{n}^{(\alpha)}Q_{n-1}^{(\alpha+1)}(x),\label{2q}
\end{align}
where
$(e_{n}^{(\alpha)})^{\top}=(H_{n-1}^{(\alpha+1)})^{-1}H_{n}^{(\alpha)}.$
\end{theorem}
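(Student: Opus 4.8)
The plan is to mimic the argument used for Theorem~\ref{3}, working directly with the quasi-determinant expression for $Q_n^{(\alpha)}(x)$ and using the Jacobi identity together with the homological relations. Concretely, I would start from the quasi-determinant for $(Q_{n+1}^{(\alpha)}(x))^\top$ and apply the non-commutative Jacobi identity to a suitable pair of rows and columns: since the $x$-monomials $\mathbb{I}_p, x\mathbb{I}_p,\dots,x^{n+1}\mathbb{I}_p$ occupy the last row of that quasi-determinant, the natural choice is to expand along the last two rows and the first and last columns. This should split $(Q_{n+1}^{(\alpha)})^\top$ into a term whose boxed entry sits over the same column structure but shifted moment rows---recognizable as $(Q_n^{(\alpha+\theta)})^\top$ up to multiplication by a quasi-determinant ratio---plus a correction term proportional to $(Q_n^{(\alpha)})^\top$. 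The bookkeeping of which moment indices appear (the $\alpha+\theta$ shift arises precisely because deleting the first column of the moment block advances every index by $\theta$) is the part that needs care.

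Alternatively, and perhaps more cleanly, I would give the functional-analytic proof parallel to the one written out for the Geronimus transformation of $P_n^{(\alpha)}$. Namely, observe that $\{Q_n^{(\alpha+1)}(x)\}_{n\in\mathbb{N}}$ is a basis of $\mathbb{R}^{p\times p}[x]$, so one may expand
\begin{align}\label{qexp}
Q_n^{(\alpha)}(x)=Q_n^{(\alpha+1)}(x)+e_{n,n-1}^{(\alpha)}Q_{n-1}^{(\alpha+1)}(x)+\cdots+e_{n,0}^{(\alpha)}Q_0^{(\alpha+1)}(x),
\end{align}
where equality of leading coefficients forces the coefficient of $Q_n^{(\alpha+1)}$ to be $\mathbb{I}_p$. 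To pin down the remaining coefficients, I would pair \eqref{qexp} on the left against $P_i^{(\alpha+1)}(x)$ under $\langle\cdot,\cdot\rangle_\theta^{\alpha+1}$. The key computation is that
\[
\langle P_i^{(\alpha+1)}(x),Q_n^{(\alpha)}(x)\rangle_\theta^{\alpha+1}=\langle x^{?}P_i^{(\alpha+1)}(x),Q_n^{(\alpha)}(x)\rangle_\theta^{\alpha}
\]
relates a weight-$x^{\alpha+1}$ pairing to a weight-$x^{\alpha}$ pairing; here one uses $W^{(\alpha+1)}(x)=x\,W^{(\alpha)}(x)$ directly on the left slot (no quasi-symmetry needed, just linearity of the integral in the weight). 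Since $P_i^{(\alpha+1)}$ still has degree $i$, the pairing $\langle P_i^{(\alpha+1)},Q_n^{(\alpha)}\rangle_\theta^\alpha$ vanishes whenever the degree of the $P$-argument is below $n$ after accounting for the extra $x$, i.e.\ for $i+1<n$, hence $e_{n,j}^{(\alpha)}=0$ for $j\le n-2$. For $i=n-1$ the nonvanishing term gives $e_{n,n-1}^{(\alpha)}=\langle P_{n-1}^{(\alpha+1)}(x),Q_n^{(\alpha)}(x)\rangle_\theta^{\alpha+1}\big(H_{n-1}^{(\alpha+1)}\big)^{-1}$, and identifying the numerator with $H_n^{(\alpha)}$ (up to transpose, by the same moment-determinant argument used to compute $H_n^{(\alpha)}$) yields $(e_n^{(\alpha)})^\top=(H_{n-1}^{(\alpha+1)})^{-1}H_n^{(\alpha)}$ as claimed.

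I would present the second approach in the paper since it is the shortest and is stated as "simply using the bi-orthogonal relation." The main obstacle is purely organizational: getting the transpose placement right. Because the bimodule action on the right slot of $\langle\cdot,\cdot\rangle_\theta$ introduces a transpose \eqref{bimodule}, expanding \eqref{qexp} and pairing against the left slot means the coefficients $e_{n,j}^{(\alpha)}$ come out transposed relative to where they sit in \eqref{qexp}; one must therefore track carefully whether a given $H$-factor appears as $H$ or $H^\top$, and in which order the product is taken, to land on exactly $(e_n^{(\alpha)})^\top=(H_{n-1}^{(\alpha+1)})^{-1}H_n^{(\alpha)}$ rather than its transpose or reverse. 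A secondary point to verify is the weight-shift identity $W^{(\alpha+1)}=x W^{(\alpha)}$ at the level of the adjacent bilinear form, but this is immediate from \eqref{72} and requires no appeal to the quasi-symmetry property \eqref{qs}. Everything else is a routine repetition of the pattern already established for the $P$-family.
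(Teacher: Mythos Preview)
Your second (functional-analytic) approach is correct and is exactly what the paper intends: the omitted proof is declared to be parallel to the $P$-family Geronimus argument, i.e.\ expand $Q_n^{(\alpha)}$ in the basis $\{Q_j^{(\alpha+1)}\}$ and pair against $P_i^{(\alpha+1)}$ under $\langle\cdot,\cdot\rangle_\theta^{\alpha+1}$. The missing exponent in your display is simply $1$ (since $W^{(\alpha+1)}=xW^{(\alpha)}$, so $\langle P_i^{(\alpha+1)},Q_n^{(\alpha)}\rangle_\theta^{\alpha+1}=\langle xP_i^{(\alpha+1)},Q_n^{(\alpha)}\rangle_\theta^{\alpha}$), and the bimodule rule \eqref{bimodule} applied to the right slot gives $\langle P_i^{(\alpha+1)},Q_n^{(\alpha)}\rangle_\theta^{\alpha+1}=H_i^{(\alpha+1)}(e_{n,i}^{(\alpha)})^\top$; at $i=n-1$ this yields $(e_n^{(\alpha)})^\top=(H_{n-1}^{(\alpha+1)})^{-1}H_n^{(\alpha)}$ with the inverse on the \emph{left}, correcting the tentative order you wrote.

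One caution on your first (Jacobi-identity) sketch: as written it produces the Christoffel transformation \eqref{h}, not the Geronimus one. Deleting the first column of the moment block in $(Q_{n+1}^{(\alpha)})^\top$ advances the column index by $\theta$, giving an $\alpha\to\alpha+\theta$ shift and a relation among $Q_{n+1}^{(\alpha)}$, $Q_n^{(\alpha+\theta)}$, $Q_n^{(\alpha)}$. The Geronimus relation for $Q$ involves an $\alpha\to\alpha+1$ shift, which in the quasi-determinant picture comes from deleting a \emph{row} of moments (row indices increment by $1$), not a column. Since you plan to present the functional argument anyway, this does not affect your proof.
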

In this case, if we denote
\begin{align*}
\Psi^{(\alpha)}=\left(\begin{array}{c}
Q_0^{(\alpha)}(x)\\
Q_1^{(\alpha)}(x)\\
\vdots
\end{array}
\right),
\end{align*}
then its corresponding Christoffel transformation (\ref{h}) could be written as
\begin{align}\label{k}
x\Psi^{(\alpha+\theta)}=\mathscr{R}^{(\alpha)}\Psi^{(\alpha)},\quad \mathscr{R}^{(\alpha)}=\left(\begin{array}{cccc}
q_{1}^{(\alpha)}&\mathbb{I}_{p}&&\\
&q_{2}^{(\alpha)}&\mathbb{I}_{p}&\\
&&\ddots&\ddots
\end{array}\right),
\end{align}
and the Geronimus transformation (\ref{2q}) could be written as
\begin{align}\label{l}
\Psi^{(\alpha)}=\mathscr{L}^{(\alpha)}\Psi^{(\alpha+1)},\quad  \mathscr{L}^{(\alpha)}=\left(\begin{array}{cccc}
\mathbb{I}_{p}&&&\\
e_{1}^{(\alpha)}&\mathbb{I}_{p}&&\\
&e_{2}^{(\alpha)}&\mathbb{I}_{p}&\\
&&\ddots&\ddots
\end{array}\right).
\end{align}
Moreover, the compatibility condition of (\ref{k}) and (\ref{l}) results in
\begin{align}\label{identity}
\mathscr{R}^{(\alpha)}\mathscr{L}^{(\alpha)}=\mathscr{L}^{(\alpha+\theta)}\mathscr{R}^{(\alpha+1)},
\end{align}
which could be written as nonlinear equations
\begin{align} \label{3p}
\begin{split}
q_{n+1}^{(\alpha)}&e_{n}^{(\alpha)}=e_{n}^{(\alpha+\theta)}q_{n}^{(\alpha+1)},\\
q_{n}^{(\alpha+1)}&+e_{n-1}^{(\alpha+\theta)}=q_{n}^{(\alpha)}+e_{n}^{(\alpha)}.
\end{split}
\end{align}
The equation (\ref{3p}) is called the fully discrete non-commutative hungry Toda-II lattice.
\begin{remark}
Although equations \eqref{3q} and \eqref{3p} have different nonlinear forms, they admit the same ``bilinear'' form which is written by $\{H_n^{(\alpha)}\}_{n\in\mathbb{N},\alpha\in\mathbb{Z}}$. In other words, if we substitute $(q_{n+1}^{(\alpha)})^\top=(H_n^{(\alpha)})^{-1}H_n^{(\alpha+\theta)}$ and $(e_{n}^{(\alpha)})^{\top}=(H_{n-1}^{(\alpha+1)})^{-1}H_{n}^{(\alpha)}$ into \eqref{3p}, then we could  get \eqref{be} again.
\end{remark}
%%%%%%%%%%%%%%%%%%%%%%%%%%%%%%%%%%%%%%%%%%%%%%%%%%%%%%%%%%%%%%%%%%%%%%%%%%%%%%%%%%%%%%%%%%%%%%
\section{Generalized block qd-algorithm deduced by non-commutative hungry Toda lattice}\label{sec.4}
In this section, we consider the finite truncation for the non-commutative discrete hungry Toda lattice, from which a pre-processing algorithm for eigenvalues are proposed. We refer to this algorithm as the generalized block qd algorithm, since it could be reduced to the block qd-algorithm in a special case. We also conduct an analysis of the convergence of the generalized block qd algorithm.
\subsection{Finite truncation for non-commutative discrete hungry Toda system}\label{sec.4.1}
To explore potential numerical applications, we first impose some zero boundary conditions on the  non-commutative discrete hungry Toda-II lattice, resulting in the following finite-dimensional system,
\begin{align}\label{toda2}
\left\{\begin{array}{llll}
q_{m}^{(\alpha+1)}+e_{m-1}^{(\alpha+\theta)}=q_{m}^{(\alpha)}+e_{m}^{(\alpha)},\\
m=1,2,\cdots,n,\quad \alpha=0,1,\cdots,\\
e_{m}^{(\alpha+\theta)}q_{m}^{(\alpha+1)}=q_{m+1}^{(\alpha)}e_{m}^{(\alpha)},\\
m=1,2,\cdots,n-1,\quad \alpha=0,1,\cdots\\
e_{0}^{(\alpha)}\equiv\mathbb{O}_{p},\quad e_{n}^{(\alpha)}\equiv\mathbb{O}_{p},\quad \alpha=0,1,\cdots,
\end{array}\right.
\end{align}
where $\mathbb{O}_{p}$ is a $p\times p$ null matrix, and $n\in\mathbb{Z}_+$.
Under such circumstance, if we denote
\begin{align*}
\Psi_n^{(\alpha)}=\left(\begin{array}{c}
Q_0^{(\alpha)}(x)\\
Q_1^{(\alpha)}(x)\\
\vdots\\
Q_{n-1}^{(\alpha)}(x)
\end{array}
\right),
\end{align*}
the recurrence relation \eqref{recurrenceq} could be written as
\begin{align}\label{pt}
x\Psi_n^{(\alpha)}=\mathscr{J}_n^{(\alpha)}\Psi_n^{(\alpha)},\quad \mathscr{J}_n^{(\alpha)}=\Lambda+\ell^{(\alpha)}_0\Lambda^0+\ell^{(\alpha)}_1\Lambda^\top+\cdots+\ell^{(\alpha)}_\theta (\Lambda^\top)^\theta,
\end{align}
where
\begin{align*}
\Lambda=\left(\begin{array}{cccccc}
0&\mathbb{I}_p&0&0&\cdots&0\\
0&0&\mathbb{I}_p&0&\cdots&0\\
0&0&0&\mathbb{I}_p&\cdots&0\\
\vdots&\vdots&\vdots&\vdots&\ddots&\vdots\\
0&0&0&0&\cdots&\mathbb{I}_p\\
0&0&0&0&\cdots&0
\end{array}
\right),
\end{align*}
and $\ell_i^{(\alpha)}=\text{diag}(\ell_{i,0}^{(\alpha)},\ell_{i+1,1}^{(\alpha)},\cdots,\ell_{i+n-1,n-1}^{(\alpha)})$.
Moreover, block matrices $\mathscr{R}^{(\alpha)}$ and $\mathscr{L}^{(\alpha)}$ in \eqref{k} and \eqref{l} can be truncated, and we have
\begin{align}\label{dpt}
x\Psi_n^{(\alpha+\theta)}=\mathscr{R}_n^{(\alpha)}\Psi_n^{(\alpha)},\quad \Psi_n^{(\alpha)}=\mathscr{L}_n^{(\alpha)}\Psi_n^{(\alpha+1)},
\end{align}
with operators
\[
\mathscr{R}_{n}^{(\alpha)}=\Lambda+q^{(\alpha)}\Lambda^0,\quad
\mathscr{L}_n^{(\alpha)}=\Lambda^0+e^{(\alpha)}\Lambda^\top,
\]
where $q^{(\alpha)}=\text{diag}(q_1^{(\alpha)},\cdots,q_n^{(\alpha)})$ and $e^{(\alpha)}=\text{diag}(e_0^{(\alpha)},\cdots,e_{n-1}^{(\alpha)})$.

We show in the next theorem about relations between spectral operators.
\begin{theorem}
For any $\alpha\in\mathbb{N}$, spectral operators $\mathscr{J}_n^{(\alpha)}$, $\mathscr{L}_n^{(\alpha)}$ and $\mathscr{R}_n^{(\alpha)}$ have following relations
\begin{subequations}
\begin{align}
&\mathscr{R}_n^{(\alpha)}\mathscr{L}_n^{(\alpha)}=\mathscr{L}_{n}^{(\alpha+\theta)}\mathscr{R}_n^{(\alpha+1)},\label{sub1}\\
&\mathscr{J}_n^{(\alpha)}=\mathscr{L}_n^{(\alpha)}\mathscr{L}_n^{(\alpha+1)}\cdots\mathscr{L}_n^{(\alpha+\theta-1)}\mathscr{R}_n^{(\alpha)},\label{sub2}\\
&\mathscr{J}_{n}^{(\alpha+1)}=\left(
\mathscr{L}_n^{(\alpha)}
\right)^{-1}\mathscr{J}_n^{(\alpha)}\mathscr{L}_n^{(\alpha)}.\label{sub3}
\end{align}
\end{subequations}
\end{theorem}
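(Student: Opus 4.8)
The plan is to take \eqref{sub1} as a short direct computation, then deduce \eqref{sub2} from the spectral relations \eqref{pt}--\eqref{dpt}, and finally obtain \eqref{sub3} from \eqref{sub1} and \eqref{sub2} by an $LR$-type conjugation argument.

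For \eqref{sub1} I would substitute the explicit bidiagonal forms $\mathscr{R}_n^{(\alpha)}=\Lambda+q^{(\alpha)}\Lambda^0$ and $\mathscr{L}_n^{(\alpha)}=\Lambda^0+e^{(\alpha)}\Lambda^\top$ into both sides and expand. The product $\mathscr{R}_n^{(\alpha)}\mathscr{L}_n^{(\alpha)}$ then splits into a super-diagonal band $\Lambda$, a block-diagonal band built from $q^{(\alpha)}$ and the up--down shifted copy $\Lambda e^{(\alpha)}\Lambda^\top$ of $e^{(\alpha)}$, and a sub-diagonal band $q^{(\alpha)}e^{(\alpha)}\Lambda^\top$; the same for $\mathscr{L}_n^{(\alpha+\theta)}\mathscr{R}_n^{(\alpha+1)}$. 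Matching the three bands block-entry by block-entry reproduces exactly the first line of \eqref{toda2} from the diagonal band and the second line of \eqref{toda2} from the sub-diagonal band, while the two corner positions are consistent precisely because of the boundary conditions $e_0^{(\alpha)}\equiv\mathbb{O}_{p}$ and $e_n^{(\alpha)}\equiv\mathbb{O}_{p}$. Thus \eqref{sub1} is just the compatibility relation \eqref{identity} transported to the finite truncation, and the only point needing attention is this boundary bookkeeping.

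For \eqref{sub2} I would iterate the truncated Geronimus relation in \eqref{dpt}: composing $\Psi_n^{(\alpha+k)}=\mathscr{L}_n^{(\alpha+k)}\Psi_n^{(\alpha+k+1)}$ for $k=0,1,\dots,\theta-1$ gives $\Psi_n^{(\alpha)}=\mathscr{L}_n^{(\alpha)}\mathscr{L}_n^{(\alpha+1)}\cdots\mathscr{L}_n^{(\alpha+\theta-1)}\Psi_n^{(\alpha+\theta)}$. Multiplying by $x$ and then applying the truncated Christoffel relation $x\Psi_n^{(\alpha+\theta)}=\mathscr{R}_n^{(\alpha)}\Psi_n^{(\alpha)}$ from \eqref{dpt} yields $x\Psi_n^{(\alpha)}=\mathscr{L}_n^{(\alpha)}\cdots\mathscr{L}_n^{(\alpha+\theta-1)}\mathscr{R}_n^{(\alpha)}\Psi_n^{(\alpha)}$, which together with $x\Psi_n^{(\alpha)}=\mathscr{J}_n^{(\alpha)}\Psi_n^{(\alpha)}$ from \eqref{pt} gives $\big(\mathscr{J}_n^{(\alpha)}-\mathscr{L}_n^{(\alpha)}\cdots\mathscr{L}_n^{(\alpha+\theta-1)}\mathscr{R}_n^{(\alpha)}\big)\Psi_n^{(\alpha)}=0$. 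To turn this into the operator identity \eqref{sub2} I would use that the components $Q_0^{(\alpha)}(x),\dots,Q_{n-1}^{(\alpha)}(x)$ of $\Psi_n^{(\alpha)}$ are monic of degrees $0,\dots,n-1$: writing $\Psi_n^{(\alpha)}=C\,(\mathbb{I}_p,x\mathbb{I}_p,\dots,x^{n-1}\mathbb{I}_p)^\top$ with $C$ block lower triangular with identity diagonal blocks, the matrix $C$ is invertible and the monomial vector has coefficientwise independent entries, so any block matrix annihilating $\Psi_n^{(\alpha)}$ must vanish. This cancellation is the main obstacle: it is what legitimizes passing from a relation among polynomial vectors to a relation among the truncated operators, and it tacitly relies on the exactness of the truncated recurrence \eqref{pt} under the imposed boundary conditions.

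Finally, for \eqref{sub3} I would conjugate: since $\mathscr{L}_n^{(\alpha)}=\Lambda^0+e^{(\alpha)}\Lambda^\top$ is lower bidiagonal with identity diagonal blocks, hence invertible, \eqref{sub2} gives $(\mathscr{L}_n^{(\alpha)})^{-1}\mathscr{J}_n^{(\alpha)}\mathscr{L}_n^{(\alpha)}=\mathscr{L}_n^{(\alpha+1)}\cdots\mathscr{L}_n^{(\alpha+\theta-1)}\mathscr{R}_n^{(\alpha)}\mathscr{L}_n^{(\alpha)}$. Rewriting the tail $\mathscr{R}_n^{(\alpha)}\mathscr{L}_n^{(\alpha)}=\mathscr{L}_n^{(\alpha+\theta)}\mathscr{R}_n^{(\alpha+1)}$ by \eqref{sub1} turns the right-hand side into $\mathscr{L}_n^{(\alpha+1)}\cdots\mathscr{L}_n^{(\alpha+\theta-1)}\mathscr{L}_n^{(\alpha+\theta)}\mathscr{R}_n^{(\alpha+1)}$, which by \eqref{sub2} with $\alpha$ replaced by $\alpha+1$ is exactly $\mathscr{J}_n^{(\alpha+1)}$. (Equivalently, \eqref{sub3} can be read off directly from the spectral relations: $x\Psi_n^{(\alpha)}=x\mathscr{L}_n^{(\alpha)}\Psi_n^{(\alpha+1)}=\mathscr{L}_n^{(\alpha)}\mathscr{J}_n^{(\alpha+1)}(\mathscr{L}_n^{(\alpha)})^{-1}\Psi_n^{(\alpha)}$, followed by the same cancellation.) In particular \eqref{sub3} exhibits the family $\{\mathscr{J}_n^{(\alpha)}\}_{\alpha\in\mathbb{N}}$ as an isospectral sequence, which is the property the subsequent convergence analysis will exploit.
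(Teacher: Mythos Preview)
Your proposal is correct and follows essentially the same approach as the paper: \eqref{sub1} is the finite truncation of \eqref{identity}, \eqref{sub2} follows from iterating the Geronimus step, applying the Christoffel step, and then cancelling via the polynomial independence of the entries of $\Psi_n^{(\alpha)}$, and \eqref{sub3} comes from the spectral relations together with the invertibility of $\mathscr{L}_n^{(\alpha)}$. Your primary derivation of \eqref{sub3} via the $LR$-type conjugation using \eqref{sub1} and \eqref{sub2} is a slightly different but equally valid route that the paper does not take, though your parenthetical alternative is exactly the paper's argument; you are also more explicit than the paper about the boundary bookkeeping in \eqref{sub1} and the cancellation step in \eqref{sub2}, both of which the paper handles in a single sentence.
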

\begin{proof}
The equation \eqref{sub1} is slightly different from the equation \eqref{identity}, where the former equation is a finite truncation for the latter one. Moreover, from equations \eqref{pt} and \eqref{dpt}, we have
\begin{align*}
\mathscr{J}_n^{(\alpha)}\Psi_n^{(\alpha)}=x\Psi_n^{(\alpha)}=x\mathscr{L}_n^{(\alpha)}\cdots\mathscr{L}_n^{(\alpha+\theta-1)}\Psi_{n}^{(\alpha+\theta)}=\mathscr{L}_n^{(\alpha)}\cdots\mathscr{L}_n^{(\alpha+\theta-1)}\mathscr{R}_n^{(\alpha)}\Psi_n^{(\alpha)}.
\end{align*}
As the above equation is valid for all $x\in\mathbb{R}$, we know that the equation \eqref{sub2} is true. Since $\mathscr{L}_n^{(\alpha)}$ is invertible, we have
$
\Psi_n^{(\alpha+1)}=\left(
\mathscr{L}_n^{(\alpha)}
\right)^{-1}\Psi_n^{(\alpha)}
$. Then from the equation $x\Psi_n^{(\alpha+1)}=\mathscr{J}_n^{(\alpha+1)}\Psi_n^{(\alpha+1)}$, we obtain that
\begin{align*}
\left(
\mathscr{L}_n^{(\alpha)}
\right)^{-1}\mathscr{J}_n^{(\alpha)}\Psi_n^{(\alpha)}=\mathscr{J}_n^{(\alpha+1)}\left(
\mathscr{L}_n^{(\alpha)}
\right)^{-1}\Psi_n^{(\alpha)}.
\end{align*}
This equation is equivalent to \eqref{sub3} since it is valid for all $x\in\mathbb{R}$.
\end{proof}

In fact, the equation \eqref{sub3} implies that the eigenvalues of $\mathscr{J}_{n}^{(\alpha)}$ are invariant under the evolution from $\alpha$ to $\alpha+1$ by the non-commutative discrete hungry Toda-II lattice \eqref{toda2}, and this discrete lattice equation could be viewed as the recurrence algorithm for the {\emph{generalized block LR transformation}}
\begin{align}\label{glr}
\left\{\begin{array}{ll}
\mathscr{J}_{n}^{(\alpha)}=\mathscr{L}_{n}^{(\alpha)}\mathscr{L}_{n}^{(\alpha+1)}\cdots\mathscr{L}_{n}^{(\alpha+\theta-1)}\mathscr{R}_{n}^{(\alpha)},\\
\mathscr{J}_{n}^{(\alpha+\theta)}=\mathscr{R}_{n}^{(\alpha)}\mathscr{L}_{n}^{(\alpha)}\mathscr{L}_{n}^{(\alpha+1)}\cdots\mathscr{L}_{n}^{(\alpha+\theta-1)}.
\end{array}\right.
\end{align}
Noting that when $\theta=1$, \eqref{glr} reduces to the block LR transformation and the corresponding algorithm \eqref{toda2} becomes block qd-algorithm \cite{wynn1963nc}. Thus we call this recurrence algorithm \eqref{toda2} the \emph{generalized block qd-algorithm}.
%Consequently, computing the eigenvalues of $\mathscr{J}_{n}^{(0)}$ is equivalent to
%computing the eigenvalues of $\mathscr{J}_{n}^{(\alpha\theta)}$ as $\alpha \rightarrow \infty$. Moreover, we have the following asymptotic convergence theorem.
%Noting that $\mathscr{L}_{n}^{(\alpha)}$ are nonsingular, we can rewrite \eqref{define} as
%\begin{align}\label{sim}
%\mathscr{J}_{n}^{(\alpha+1)}=(\mathscr{L}_{n}^{(\alpha)})^{-1}\mathscr{J}_{n}^{(\alpha)}\mathscr{L}_{n}^{(\alpha)},\quad \alpha=0,1,\cdots,
%\end{align}
\subsection{Convergence of the generalized block qd-algorithm}\label{sec.4.2}
In this subsection, we prove the convergence of the generalized block qd-algorithm under certain assumptions.
Let $\mathscr{J}_{n}^{(0)}$ be an $n\times n$ block Hessenberg matrix given by $\mathscr{J}_{n}^{(0)}=\mathscr{L}_{n}^{(0)}\cdots\mathscr{L}_{n}^{(\theta-1)}\mathscr{R}_{n}^{(0)}$, with
\begin{align}\label{lnrn}
\mathscr{R}_{n}^{(\alpha)}=\left(\begin{array}{ccccc}
q_{1}^{(\alpha)}&\mathbb{I}_{p}&&&\\
&q_{2}^{(\alpha)}&\mathbb{I}_{p}&&\\
&&\ddots&\ddots&\\
&&&q_{n-1}^{(\alpha)}&\mathbb{I}_{p}\\
&&&&q_{n}^{(\alpha)}
\end{array}\right),\quad
\mathscr{L}_{n}^{(\alpha)}=\left(\begin{array}{ccccc}
\mathbb{I}_{p}&&&&\\
e_{1}^{(\alpha)}&\mathbb{I}_{p}&&&\\
&e_{2}^{(\alpha)}&\mathbb{I}_{p}&&\\
&&\ddots&\ddots&\\
&&&e_{n-1}^{(\alpha)}&\mathbb{I}_{p}
\end{array}\right),
\end{align}
whose entries are $p\times p$ blocks. Then by the generalized block qd-algorithm \eqref{toda2}, all entries $\{q_{1}^{(\alpha+1)}, \cdots, q_{n}^{(\alpha+1)}, e_{1}^{(\alpha+\theta)}, \cdots, e_{n-1}^{(\alpha+\theta)}\}_{\alpha\in\mathbb{N}}$ can be obtained. The following theorems describe the asymptotic behaviour of $\mathscr{J}_{n}^{(\theta\alpha)}$ as $\alpha\rightarrow\infty$.
\begin{theorem}\label{th.1}
Suppose that the modulus of eigenvalues $\lambda_{i}$ of $\mathscr{J}_{n}^{(0)}$ are distinct and arranged in descending order $|\lambda_{i}|> |\lambda_{i+1}|, i=1,2\cdots np-1$.
Let $D_{n}$ be the diagonal matrix containing all the eigenvalues such that $D_{n}(i,i)=\lambda_{i},\,i=1, \,2, \,\cdots, \,np$, and $X_{n}$ be the matrix of the eigenvectors associated to $D_{n}$. If the $n-1$ dominant principal block minors of $X_{n}$ and $X_{n}^{-1}$ are nonzero, then
\begin{align}\label{limit_j1}
\lim_{\alpha\rightarrow\infty}\mathscr{J}_{n}^{(\theta\alpha)}=\left(\begin{array}{cccc}
q_{1}^{(\theta\alpha)}&\mathbb{I}_{p}&&\\
&\ddots&\ddots&\\
&&q_{n-1}^{(\theta\alpha)}&\mathbb{I}_{p}\\
&&&q_{n}^{(\theta\alpha)}
\end{array}\right).
\end{align}
%\begin{align}\label{lambda_single}
%\lim_{\alpha\rightarrow\infty}\lambda_{i,j}^{(\theta\alpha)}=\lambda_{(i-1)p+j},
%\end{align}
%where $\lambda_{i,j}^{(\alpha)}$, $i=1,2,\cdots,n$, $j=1,2,\cdots,p$, denote the eigenvalues of each block $q_{i}^{(\alpha)}$, and satisfy $|\lambda_{i,j}^{(\alpha)}|\geq |\lambda_{i,j+1}^{(\alpha)}|$.
\end{theorem}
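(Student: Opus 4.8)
The plan is to mimic the classical convergence proof for the LR/qd-algorithm, adapted to the block ($p\times p$) setting, using the factorization identity \eqref{sub2} as the basic tool. First I would iterate \eqref{sub2}–\eqref{sub3}: since $\mathscr{J}_n^{(\alpha+1)}=(\mathscr{L}_n^{(\alpha)})^{-1}\mathscr{J}_n^{(\alpha)}\mathscr{L}_n^{(\alpha)}$, repeated application over a full period of length $\theta$ in $\alpha$ gives
\begin{align*}
\mathscr{J}_n^{(\theta\alpha)}=\left(\mathscr{L}_n^{(0)}\cdots\mathscr{L}_n^{(\theta\alpha-1)}\right)^{-1}\mathscr{J}_n^{(0)}\left(\mathscr{L}_n^{(0)}\cdots\mathscr{L}_n^{(\theta\alpha-1)}\right),
\end{align*}
and by collapsing \eqref{sub2} across the powers one shows that the accumulated product $\mathscr{M}_\alpha:=\mathscr{L}_n^{(0)}\cdots\mathscr{L}_n^{(\theta\alpha-1)}$ together with the accumulated upper-triangular factor $\mathscr{U}_\alpha:=\mathscr{R}_n^{(\theta\alpha-1)}\cdots\mathscr{R}_n^{(0)}$ (times the intervening $\mathscr{L}$'s reorganized via \eqref{sub1}) yields a block-LR-type decomposition $(\mathscr{J}_n^{(0)})^{\alpha}=\mathscr{M}_\alpha\,\mathscr{U}_\alpha$, where $\mathscr{M}_\alpha$ is block lower-triangular with identity diagonal blocks and $\mathscr{U}_\alpha$ is block upper-triangular. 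This reduces the theorem to showing that $\mathscr{M}_\alpha$ converges as $\alpha\to\infty$, since then each $\mathscr{L}_n^{(\theta\alpha)}=\mathscr{M}_\alpha^{-1}\mathscr{M}_{\alpha+1}$ converges to the identity, which by the finite-truncation equations \eqref{toda2} forces $e_m^{(\theta\alpha)}\to\mathbb{O}_p$, i.e. \eqref{limit_j1}.

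Next I would analyze $\mathscr{M}_\alpha$ through the spectral decomposition $\mathscr{J}_n^{(0)}=X_n D_n X_n^{-1}$, so that $(\mathscr{J}_n^{(0)})^\alpha=X_n D_n^\alpha X_n^{-1}$. Writing $X_n=\mathscr{M}_X\mathscr{U}_X$ and $X_n^{-1}=\mathscr{L}_Y\mathscr{U}_Y$ for the block-LR factorizations guaranteed by the hypothesis that the $n-1$ dominant principal block minors of $X_n$ and $X_n^{-1}$ are nonzero, one gets
\begin{align*}
(\mathscr{J}_n^{(0)})^\alpha=\mathscr{M}_X\,\mathscr{U}_X\,D_n^\alpha\,\mathscr{L}_Y\,\mathscr{U}_Y=\mathscr{M}_X\,\mathscr{U}_X\,\left(D_n^\alpha\,\mathscr{L}_Y\,D_n^{-\alpha}\right)D_n^\alpha\,\mathscr{U}_Y.
\end{align*}
The key estimate is that $D_n^\alpha\mathscr{L}_Y D_n^{-\alpha}=\mathbb{I}+E_\alpha$ with $E_\alpha\to\mathbb{O}$ geometrically: its $(i,j)$ block entry carries a factor $(\lambda_i/\lambda_j)^\alpha$ with $i>j$, hence $|\lambda_i/\lambda_j|<1$ by the strict ordering assumption. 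Factoring $\mathbb{I}+E_\alpha$ itself into block-LR form $=\mathscr{M}_\alpha'\,\mathscr{U}_\alpha'$ (possible for $\alpha$ large since $E_\alpha\to0$, and with both factors $\to\mathbb{I}$), one obtains $(\mathscr{J}_n^{(0)})^\alpha=(\mathscr{M}_X\mathscr{U}_X\mathscr{M}_\alpha'\mathscr{U}_X^{-1})(\mathscr{U}_X\mathscr{U}_\alpha' D_n^\alpha\mathscr{U}_Y)$; the first bracket is block lower-triangular with identity diagonal (after normalizing), the second block upper-triangular. By uniqueness of the block-LR decomposition (which follows from invertibility of the leading principal block minors of $(\mathscr{J}_n^{(0)})^\alpha$), this matches $\mathscr{M}_\alpha\mathscr{U}_\alpha$, so $\mathscr{M}_\alpha=\mathscr{M}_X\mathscr{U}_X\mathscr{M}_\alpha'\mathscr{U}_X^{-1}\to\mathscr{M}_X\mathscr{U}_X\cdot\mathbb{I}\cdot\mathscr{U}_X^{-1}$, a fixed invertible limit. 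Consequently $\mathscr{L}_n^{(\theta\alpha)}=\mathscr{M}_\alpha^{-1}\mathscr{M}_{\alpha+1}\to\mathbb{I}$, and \eqref{limit_j1} follows.

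The main obstacle is the non-commutativity: at every step where the scalar proof would simply divide, I must be careful that the block-LR (lower-unitriangular times upper-triangular) factorization exists, is unique, and depends continuously on the matrix — all of which hinge on the nonvanishing of the relevant principal block minors (quasi-determinants), exactly the hypotheses imposed. In particular, I need the leading principal block minors of $(\mathscr{J}_n^{(0)})^\alpha$ to stay invertible uniformly in large $\alpha$; this should follow from the hypothesis via the factored form $X_n D_n^\alpha X_n^{-1}$ and the block Cauchy–Binet expansion of a principal block minor, whose dominant term is a nonzero product of a block minor of $X_n$, the corresponding powers of the top eigenvalues, and a block minor of $X_n^{-1}$. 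Making this block Cauchy–Binet / dominance argument rigorous — tracking that the leading block term is invertible and the remainder is $o(1)$ relative to it in the appropriate block sense — is the technical heart of the proof; everything else is a careful but routine transcription of the classical qd/LR convergence argument into the ring $\mathbb{R}^{p\times p}$.
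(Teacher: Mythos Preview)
Your proposal follows essentially the same route as the paper's proof: accumulate the $\mathscr{L}$-factors into a unit block lower-triangular $\mathscr{M}_\alpha$ (the paper's $P_n^{(\theta\alpha)}$), show via \eqref{sub1}--\eqref{sub2} that $(\mathscr{J}_n^{(0)})^\alpha=\mathscr{M}_\alpha\mathscr{U}_\alpha$, insert the spectral decomposition and the block LU factorizations of $X_n$ and $X_n^{-1}$, and use $D_n^\alpha L_Y D_n^{-\alpha}=I+E_\alpha\to I$ to conclude $\mathscr{M}_\alpha\to L_X$. This is exactly the paper's argument, including the step of LU-factoring $I+E_\alpha$ for large $\alpha$.

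One small slip in your finishing move: with $\mathscr{M}_\alpha:=\mathscr{L}_n^{(0)}\cdots\mathscr{L}_n^{(\theta\alpha-1)}$ you have $\mathscr{M}_\alpha^{-1}\mathscr{M}_{\alpha+1}=\mathscr{L}_n^{(\theta\alpha)}\cdots\mathscr{L}_n^{(\theta\alpha+\theta-1)}$, a product of $\theta$ lower-unitriangular factors, not the single factor $\mathscr{L}_n^{(\theta\alpha)}$. That this product tends to $I$ does not by itself force each $e_m^{(\theta\alpha+j)}\to\mathbb{O}_p$. The paper avoids this by finishing differently (and more simply): from $\mathscr{M}_\alpha\to L_X$ and the similarity $\mathscr{J}_n^{(\theta\alpha)}=\mathscr{M}_\alpha^{-1}\mathscr{J}_n^{(0)}\mathscr{M}_\alpha$ one gets directly $\lim_{\alpha\to\infty}\mathscr{J}_n^{(\theta\alpha)}=U_X D_n U_X^{-1}$, which is block upper-triangular; since $\mathscr{J}_n^{(\theta\alpha)}$ is always block lower Hessenberg of the form \eqref{pt}, the limit must have the shape \eqref{limit_j1}. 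Replacing your last paragraph's $\mathscr{L}_n^{(\theta\alpha)}\to I$ argument with this similarity computation closes the gap, and the block Cauchy--Binet dominance worry you raise becomes unnecessary.
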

\begin{proof}
Firstly, according to \eqref{sub3}, we have
\begin{align*}
\mathscr{J}_{n}^{(\alpha)}=&(\mathscr{L}_{n}^{(\alpha-1)})^{-1}(\mathscr{L}_{n}^{(\alpha-2)})^{-1}\cdots(\mathscr{L}_{n}^{(0)})^{-1}
\mathscr{J}_{n}^{(0)}\mathscr{L}_{n}^{(0)}\mathscr{L}_{n}^{(1)}\cdots\mathscr{L}_{n}^{(\alpha-1)}\\
=&(P_{n}^{(\alpha)})^{-1}\mathscr{J}_{n}^{(0)}P_{n}^{(\alpha)}.
\end{align*}
If we set $Q_{n}^{(\alpha)}=\mathscr{R}_{n}^{(\theta(\alpha-1))}\cdots\mathscr{R}_{n}^{(\theta)}\mathscr{R}_{n}^{(0)}$, with the help of \eqref{sub1} and \eqref{sub2} we have $(\mathscr{J}_{n}^{(0)})^{\alpha}=P_{n}^{(\theta\alpha)}Q_{n}^{(\alpha)}$.

Next, we denote $X_{n}^{-1}=Y_{n}$, then $\mathscr{J}_{n}^{(0)}=X_{n}D_{n}Y_{n}$.
Since the $n-1$ dominant principal block minors of $Y_{n}$ are nonzero, it possesses a block $L_{Y}U_{Y}$ decomposition \cite[Theorem 3.7]{quarteroni2000book}, where $L_{Y}$ is a unit block lower triangular matrix. Therefore,
\begin{align}\label{ii}
(\mathscr{J}{n}^{(0)})^{\alpha}=X{n}D_{n}^{\alpha}Y_{n}=X_{n}(D_{n}^{\alpha}L_{Y}D_{n}^{-\alpha})D_{n}^{\alpha}U_{Y}.
\end{align}

If matrix $D_{n}$ is partitioned into blocks such that the diagonal blocks are of size $p \times p$, that is to say, $D_{n}$ can be expressed as follows,
\[
D_{n}=\left(\begin{array}{cccc}
d_{1}&&&\\
&d_{2}&&\\
&&\ddots&\\
&&&d_{n}
\end{array}
\right),\quad d_{k}=\left(\begin{array}{cccc}
\lambda_{(k-1)p+1}&&&\\
&\lambda_{(k-1)p+2}&&\\
&&\ddots&\\
&&&\lambda_{kp}
\end{array}
\right),\quad k=1,2,\cdots,n,
\]
then it is easy to see that $D_{n}^{\alpha}L_{Y}D_{n}^{-\alpha}$ is a unit block lower triangular matrix and can be written as
\begin{align}
D_{n}^{\alpha}L_{Y}D_{n}^{-\alpha}=I+F_{n}^{(\alpha)}, \quad F_{n}^{(\alpha)}=\left\{\begin{array}{ll}
d_{i}^{\alpha}L_{Y}(i,j)d_{j}^{-\alpha},& for \ i>j,\\
0,& for \ i<j,
\end{array}\right.\quad i,\ j=1,\cdots,n.
\end{align}
Thus for $i>j$, the entries of $L_{Y}(i,j)$ in the $a$-th row and $b$-th column are multiplied by $(\frac{\lambda_{(i-1)p+a}}{\lambda_{(j-1)p+b}})^{\alpha}$, $a,\ b=1,\cdots,p.$
Since $|\lambda_{(i-1)p+a}|<|\lambda_{(j-1)p+b}|$, then
\begin{align*}
\lim_{\alpha\rightarrow\infty}(\frac{\lambda_{(i-1)p+a}}{\lambda_{(j-1)p+b}})^{\alpha}=0,
\end{align*}
so that
\begin{align*}
\lim_{\alpha\rightarrow\infty}F_{n}^{(\alpha)}=0.
\end{align*}
Since $X_{n}$ also has a block $L_{X}U_{X}$ decomposition, we obtain
\begin{align}
(\mathscr{J}_{n}^{(0)})^{\alpha}=L_{X}U_{X}(I+F_{n}^{(\alpha)})D_{n}^{\alpha}U_{Y}=L_{X}(I+U_{X}F_{n}^{(\alpha)}U_{X}^{-1})U_{X}D_{n}^{\alpha}U_{Y}.
\end{align}
If we set $G_{n}^{(\alpha)}=U_{X}F_{n}^{(\alpha)}U_{X}^{-1}$,
\begin{align}
(\mathscr{J}_{n}^{(0)})^{\alpha}=L_{X}(I+G_{n}^{(\alpha)})U_{X}D_{n}^{\alpha}U_{Y},\quad \lim_{\alpha\rightarrow\infty}G_{n}^{(\alpha)}=0.
\end{align}
For $k$ sufficiently large the matrix $I+G_{n}^{(\alpha)}$ is strictly diagonally dominant, therefore this matrix has a block LU decomposition, which will be written as
\begin{align}
I+G_{n}^{(\alpha)}=(I+L_{G}^{(\alpha)})(I+U_{G}^{(\alpha)}),\quad \lim_{\alpha\rightarrow\infty}L_{G}^{(\alpha)}=\lim_{\alpha\rightarrow\infty}U_{G}^{(\alpha)}=0.
\end{align}
Hence
\begin{align}
(\mathscr{J}_{n}^{(0)})^{\alpha}=L_{X}(I+L_{G}^{(\alpha)})(I+U_{G}^{(\alpha)})U_{X}D_{n}^{\alpha}U_{Y}=P_{n}^{(\theta\alpha)}Q_{n}^{(\alpha)},
\end{align}
then we get
\[
\lim_{\alpha\rightarrow\infty}P_{n}^{(\theta\alpha)}=L_{X}.
\]
Since
\begin{align}
\mathscr{J}_{n}^{(\theta\alpha)}=(P_{n}^{(\theta\alpha)})^{-1}\mathscr{J}_{n}^{(0)}P_{n}^{(\theta\alpha)}=(P_{n}^{(\theta\alpha)})^{-1}L_{X}U_{X}D_{n}U_{X}^{-1}L_{X}^{-1}P_{n}^{(\theta\alpha)}
,\end{align}
we have
\[
\lim_{\alpha\rightarrow\infty}\mathscr{J}_{n}^{(\theta\alpha)}=U_{X}D_{n}U_{X}^{-1},
\]
which is a block upper triangular matrix. However, $\mathscr{J}_{n}^{(\theta\alpha)}$ is a block lower Hessenberg matrix with the form of \eqref{pt}, hence the result \eqref{limit_j1} holds.
\end{proof}

\begin{theorem}\label{th.2}
If the eigenvalue $\lambda_{i}, i=1,2,\cdots, s (s<np)$ of $\mathscr{J}_{n}^{(0)}$ has a multiplicity $\eta_{i}$ with $\sum_{i=1}^{s}\eta_{i}=np$, and $\lambda_{i}$'s are arranged in descending order of modulus $|\lambda_{i}| > |\lambda_{i+1}|$. In this case, $D_n=diag(\lambda_{1},\cdots,\lambda_{1},\cdots,\lambda_{s},\cdots,\lambda_{s})$, and $X_n$ is the corresponding eigenvector matrix .
If the $n-1$ dominant principal block minors of $X_{n}$, $X_{n}^{-1}$ and $X_{n}\tilde{L}$ are nonzero ($\tilde{L}$ is obtained as explained in the proof from the relation \eqref{iii}), then
\begin{align}\label{limit_j2}
\lim_{\alpha\rightarrow\infty}\mathscr{J}_{n}^{(\theta\alpha)}=\left(\begin{array}{cccc}
q_{1}^{(\theta\alpha)}&\mathbb{I}_{p}&&\\
&\ddots&\ddots&\\
&&q_{n-1}^{(\theta\alpha)}&\mathbb{I}_{p}\\
&&&q_{n}^{(\theta\alpha)}
\end{array}\right).
\end{align}
%\begin{align}\label{lambda_multiple}
%\lim_{\alpha\rightarrow\infty}\lambda_{i,j}^{(\theta\alpha)}=\tilde{\lambda}_{(i-1)p+j},
%\end{align}
%where $i=1,2,\cdots,n$, $j=1,2,\cdots,p$, and
%$\tilde{\lambda}_{\sigma_{v-1}+r}=\lambda_{v}$, for $r=1,\cdots,\eta_{v},\,v=1,\cdots,s,$ with $\sigma_{v}=\sum_{i=1}^{v}\eta_{i}$, $\sigma_{0}=0$.
\end{theorem}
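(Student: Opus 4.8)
The plan is to reduce the multiple-eigenvalue case to the structure already obtained in Theorem \ref{th.1} by carefully handling the Jordan-type degeneracy through an auxiliary factorization. As before, I would start from the identity $(\mathscr{J}_{n}^{(0)})^{\alpha}=P_{n}^{(\theta\alpha)}Q_{n}^{(\alpha)}$, where $P_{n}^{(\theta\alpha)}$ is unit block lower triangular and $Q_{n}^{(\alpha)}$ is block upper triangular, together with the eigendecomposition $\mathscr{J}_{n}^{(0)}=X_{n}D_{n}Y_{n}$ with $Y_n=X_n^{-1}$ and $D_n=\mathrm{diag}(\lambda_1,\dots,\lambda_1,\dots,\lambda_s,\dots,\lambda_s)$ (each $\lambda_i$ repeated $\eta_i$ times, still arranged in strictly decreasing modulus by blocks). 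The essential new difficulty is that $D_n^{\alpha}L_Y D_n^{-\alpha}$ no longer tends to the identity: within a diagonal block of repeated eigenvalue $\lambda_i$ the conjugating factors $(\lambda_i/\lambda_i)^{\alpha}=1$ do not decay, so the sub-block of $L_Y$ sitting inside one eigenvalue-group survives in the limit. This is precisely the role of the matrix $\tilde L$ referenced in the statement: one writes $L_Y = \tilde L\, \hat L$, where $\tilde L$ collects the ``block-diagonal-in-the-eigenvalue-grouping'' unit lower-triangular part (the part that commutes with $D_n^{\alpha}$ up to bounded factors) and $\hat L$ is the remaining strictly-below-the-groups part, so that $D_n^{\alpha}\hat L D_n^{-\alpha}\to I$ while $D_n^{\alpha}\tilde L D_n^{-\alpha}=\tilde L$ exactly.

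With that splitting in hand I would rewrite
\[
(\mathscr{J}_{n}^{(0)})^{\alpha}=X_{n}\tilde L\,(D_n^{\alpha}\hat L D_n^{-\alpha})\,D_n^{\alpha}U_Y
=(X_n\tilde L)\,(I+\hat F_n^{(\alpha)})\,D_n^{\alpha}U_Y,\qquad \hat F_n^{(\alpha)}\to 0.
\]
Now the hypothesis that the $n-1$ dominant principal block minors of $X_n\tilde L$ are nonzero lets me perform a block $LU$ factorization $X_n\tilde L = L_{\tilde X}U_{\tilde X}$ with $L_{\tilde X}$ unit block lower triangular, and then absorb $\hat F_n^{(\alpha)}$ as in the proof of Theorem \ref{th.1}: set $\hat G_n^{(\alpha)}=U_{\tilde X}\hat F_n^{(\alpha)}U_{\tilde X}^{-1}\to 0$, factor $I+\hat G_n^{(\alpha)}=(I+L_{\hat G}^{(\alpha)})(I+U_{\hat G}^{(\alpha)})$ for large $\alpha$ with both correction terms vanishing, and collect
\[
(\mathscr{J}_{n}^{(0)})^{\alpha}=L_{\tilde X}(I+L_{\hat G}^{(\alpha)})\,(I+U_{\hat G}^{(\alpha)})U_{\tilde X}D_n^{\alpha}U_Y = P_n^{(\theta\alpha)}Q_n^{(\alpha)}.
\]
By uniqueness of the block $LU$ decomposition (unit lower factor) this forces $\lim_{\alpha\to\infty}P_n^{(\theta\alpha)}=L_{\tilde X}$, exactly as before but with $L_X$ replaced by $L_{\tilde X}$.

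Finally I would conclude by the same isospectral argument: since $\mathscr{J}_{n}^{(\theta\alpha)}=(P_n^{(\theta\alpha)})^{-1}\mathscr{J}_{n}^{(0)}P_n^{(\theta\alpha)}$ and $P_n^{(\theta\alpha)}\to L_{\tilde X}$, we get $\lim_{\alpha\to\infty}\mathscr{J}_{n}^{(\theta\alpha)}=L_{\tilde X}^{-1}\mathscr{J}_{n}^{(0)}L_{\tilde X}$, which is block upper triangular because $\mathscr{J}_{n}^{(0)}=(X_n\tilde L)(\text{something upper})$-type reasoning shows $L_{\tilde X}^{-1}\mathscr{J}_{n}^{(0)}L_{\tilde X}=U_{\tilde X}(\tilde L^{-1}D_n\tilde L)U_{\tilde X}^{-1}$ up to the block-triangular corrections — more carefully, $\mathscr{J}_{n}^{(0)}=X_n D_n X_n^{-1}$, and conjugating by $L_{\tilde X}$ (the lower factor of $X_n\tilde L$) produces an upper block triangular matrix since $\tilde L^{-1}D_n\tilde L$ is block upper triangular in the grouping (as $\tilde L$ is lower triangular and commutes-up-to-triangular with $D_n$ within groups). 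Intersecting ``block upper triangular'' with the structural form \eqref{pt} of $\mathscr{J}_{n}^{(\theta\alpha)}$ as a block lower Hessenberg matrix leaves only the bidiagonal shape \eqref{limit_j2}. The main obstacle, and the step deserving the most care, is making the decomposition $L_Y=\tilde L\hat L$ precise and checking that $\tilde L^{-1}D_n\tilde L$ is genuinely block upper triangular with respect to the $p\times p$ partition — this is where the assumption on the principal block minors of $X_n\tilde L$ (rather than merely $X_n$) enters, and where one must verify that the grouping of repeated eigenvalues is compatible with the block size $p$ so that the surviving part $\tilde L$ does not destroy the Hessenberg reduction.
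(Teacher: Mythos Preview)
Your approach is essentially the same as the paper's; the only cosmetic difference is that the paper uses an \emph{additive} splitting $D_n^{\alpha}L_Y D_n^{-\alpha}=\tilde L+\tilde F_n^{(\alpha)}$ (this is precisely relation \eqref{iii}) rather than your multiplicative $L_Y=\tilde L\,\hat L$, and the two are equivalent because $\tilde L$, being block-diagonal in the eigenvalue grouping, commutes with $D_n$. Both lead to $(\mathscr{J}_n^{(0)})^{\alpha}=X_n\tilde L\,(I+o(1))\,D_n^{\alpha}U_Y$, after which the block $LU$ factorization of $X_n\tilde L$ and the absorption of the $o(1)$ term proceed identically.

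The one place where you hedge---whether $\tilde L^{-1}D_n\tilde L$ is block upper triangular in the $p\times p$ partition, and whether the multiplicities $\eta_i$ must be compatible with the block size $p$---is settled in the paper by a sharper observation than you make: since $\tilde L$ is block-diagonal with respect to the $\eta_i$-grouping and $D_n$ restricts to the scalar matrix $\lambda_i I_{\eta_i}$ on each such block, $\tilde L$ and $D_n$ actually commute, so $\tilde L^{-1}D_n\tilde L=D_n$ \emph{exactly}. Hence $\lim_{\alpha\to\infty}\mathscr{J}_n^{(\theta\alpha)}=\tilde U_X D_n\tilde U_X^{-1}$, which is block upper triangular in the $p\times p$ partition simply because $\tilde U_X$ is; no compatibility between the $\eta_i$ and $p$ is needed, and intersecting with the block lower Hessenberg structure gives \eqref{limit_j2}.
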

\begin{proof}
Firstly, according to \eqref{sub3}, we have
\begin{align*}
\mathscr{J}_{n}^{(\alpha)}=&(\mathscr{L}_{n}^{(\alpha-1)})^{-1}(\mathscr{L}_{n}^{(\alpha-2)})^{-1}\cdots(\mathscr{L}_{n}^{(0)})^{-1}
\mathscr{J}_{n}^{(0)}\mathscr{L}_{n}^{(0)}\mathscr{L}_{n}^{(1)}\cdots\mathscr{L}_{n}^{(\alpha-1)}\\
=&(P_{n}^{(\alpha)})^{-1}\mathscr{J}_{n}^{(0)}P_{n}^{(\alpha)}.
\end{align*}
If we set $Q_{n}^{(\alpha)}=\mathscr{R}_{n}^{(\theta(\alpha-1))}\cdots\mathscr{R}_{n}^{(\theta)}\mathscr{R}_{n}^{(0)}$, with the help of \eqref{sub1} and \eqref{sub2} we have $(\mathscr{J}_{n}^{(0)})^{\alpha}=P_{n}^{(\theta\alpha)}Q_{n}^{(\alpha)}$.

Next, we denote $X_{n}^{-1}=Y_{n}$, then $\mathscr{J}_{n}^{(0)}=X_{n}D_{n}Y_{n}$.
Since $Y_{n}$ has a block $L_{Y}U_{Y}$ decomposition, where $L_{Y}$ is a unit block lower triangular matrix, then
\begin{align}\label{XDY}
(\mathscr{J}_{n}^{(0)})^{\alpha}=X_{n}D_{n}^{\alpha}Y_{n}=X_{n}(D_{n}^{\alpha}L_{Y}D_{n}^{-\alpha})D_{n}^{\alpha}U_{Y}.
\end{align}

The above process is exactly the same as that of Theorem \ref{th.1}.

Next, for $v\in\mathbb{N}$, $1\leq v\leq s$, we assume that $\lambda_{v}$ is an eigenvalue of $\mathscr{J}_{n}^{(0)}$ with multiplicity $\eta_{v}>1$, then for $\forall m,l\in\{\sigma_{v-1}+1,\cdots,\sigma_{v}\}$, we have $\frac{\lambda_{m}}{\lambda_{l}}=1$.
Let's use a special decomposition of $D_{n}^{\alpha}L_{Y}D_{n}^{-\alpha}$ in \eqref{XDY},
\begin{align}\label{iii}
D_{n}^{\alpha}L_{Y}D_{n}^{-\alpha}=\tilde{L}+\tilde{F}_{n}^{(\alpha)},
\end{align}
where $\tilde{L}$ is a unit block lower triangular matrix. $\tilde{L}$ has block entries only containing the scalar entries of $L_{Y}$ which are multiplied by all the $(\frac{\lambda_{a}}{\lambda_{b}})^{\alpha}$ such that $\frac{\lambda_{a}}{\lambda_{b}}=1$, and $\tilde{F}_{n}^{(\alpha)}$ only has block entries whose scalar entries are multiplied by all the $(\frac{\lambda_{a}}{\lambda_{b}})^{\alpha}$ such that $\lim_{\alpha\rightarrow\infty}(\frac{\lambda_{a}}{\lambda_{b}})^{\alpha}=0$, $1\leq a,b\leq s$, hence
\[
\lim_{\alpha\rightarrow\infty}\tilde{F}_{n}^{(\alpha)}=0.
\]

Then \eqref{XDY} becomes
\[
(\mathscr{J}_{n}^{(0)})^{\alpha}=X_{n}\tilde{L}(I+\tilde{L}^{-1}\tilde{F}_{n}^{(\alpha)})D_{n}^{\alpha}U_{Y}, \quad \lim_{\alpha\rightarrow\infty}\tilde{L}^{-1}\tilde{F}_{n}^{(\alpha)}=0.
\]
Since $X_{n}\tilde{L}$ has a $\tilde{L}_{X}\tilde{U}_{X}$ decomposition, then
\[
(\mathscr{J}_{n}^{(0)})^{\alpha}=\tilde{L}_{X}(I+\tilde{U}_{X}\tilde{L}^{-1}\tilde{F}_{n}^{(\alpha)}\tilde{U}_{X}^{-1})
\tilde{U}_{X}D_{n}^{\alpha}U_{Y}.
\]
For $\alpha$ sufficiently large the matrix $I+\tilde{U}_{X}\tilde{L}^{-1}\tilde{F}_{n}^{(\alpha)}\tilde{U}_{X}^{-1}$ is strictly diagonally dominant, thus for $\alpha$ sufficiently large,
\begin{align*}
&I+\tilde{U}_{X}\tilde{L}^{-1}\tilde{F}_{n}^{(\alpha)}\tilde{U}_{X}^{-1}=(I+\tilde{L}_{G}^{(\alpha)})(I+\tilde{U}_{G}^{(\alpha)}),\\ &\lim_{\alpha\rightarrow\infty}\tilde{L}_{G}^{(\alpha)}=\lim_{\alpha\rightarrow\infty}\tilde{U}_{G}^{(\alpha)}=0.
\end{align*}
Then we can still conclude that
\[
\lim_{\alpha\rightarrow\infty}P_{n}^{(\theta\alpha)}=\tilde{L}_{X}.
\]
Thus
\begin{align}
\begin{aligned}
\lim_{\alpha\rightarrow\infty}\mathscr{J}_{n}^{(\theta\alpha)}=&(P_{n}^{(\theta\alpha)})^{-1}\mathscr{J}_{n}^{(0)}P_{n}^{(\theta\alpha)}\\
=&\tilde{U}_{X}(\tilde{L}^{-1}D_{n}\tilde{L})\tilde{U}_{X}^{-1}.
\end{aligned}
\end{align}
Then we cut the matrix $\tilde{L}^{-1}D_{n}\tilde{L}$ in blocks, where the diagonal blocks are $\eta_{i}\times\eta_{i},\,i=1,2,\cdots,s$, we can see that $\tilde{L}^{-1}D_{n}\tilde{L}=D_{n}$.

Therefore, the block lower Hessenberg matrix $\mathscr{J}_{n}^{(\theta\alpha)}$ tends to a block upper triangular matrix as $\alpha\rightarrow\infty$, that is to say \eqref{limit_j2} holds.
\end{proof}
%It should be noted that the requirement for $\mathscr{J}_{n}^{(0)}$ to be strictly diagonally dominant is merely to ensure that all its eigenvalues are positive, which is necessary for the proof of the second case.
These two theorems imply that under certain assumptions, the generalized qd-algorithm could transform the block Hessenberg matrix $\mathscr{J}_{n}^{(0)}$ into a block triangle matrix while preserving all the eigenvalues, and thus can be viewed as a pre-processing algorithm for matrix eigenvalues. The specific process of this algorithm is as follows.
\begin{algorithm}[!h]\label{algorithm1}
    \SetAlgoLined %显示end
	\caption{the generalized block qd-algorithm}%算法名字
	\KwIn{$\theta,n,p$, $q_{m} (m=1,2,\cdots,n)$ , $e_{m}^{(i)}(m=1,2,\cdots,n-1;i=0,1,\cdots,\theta-1)$ and maximum iteration number $N$ }% 输入参数
	\KwOut{$\lambda_{m,l}(m=1,2,\cdots,n;l=1,2,\cdots,p)$}%输出
	%some description\; %\;用于换行
\For{$i=0:\theta-1$}
{$e_{0}^{(i)}=e_{n}^{(i)}=0;$}
\For{$k=1:N$}
{   \For{i=$0:\theta-1$}
    {
       {$q_{1}=q_{1}+e_{1}^{(i)}-e_{0}^{(i)};$}\\
		\For{$m=2:n$}
        {
		$e_{m-1}^{(i)}=q_{m}\cdot e_{m-1}^{(i)}\cdot q_{m-1}^{-1},$\\
        $q_{m}=q_{m}+e_{m}^{(i)}-e_{m-1}^{(i)};$
        }
    }
}\For
       {$m=1:n$}{computing the eigenvalues of $ q_{m}:\lambda_{m,l},l=1,\cdots,p$;}
\end{algorithm}
\begin{remark}
  Of course, we could also design a pre-processing algorithm starting from the discrete non-commutative hungry Toda-I lattice. However, since this approach does not fundamentally differ, so we have not listed it here.
\end{remark}

%the similar matrix sequence $\{\mathscr{J}_{n}^{(\theta\alpha)}\}_{\alpha=0}^{\infty}$ tends to a block triangle matrix.
%
%Motivated by the Lax pairs given by \eqref{iden}, we know that the iterations of the discrete non-commutative hungry Toda-II lattices could preserve the eigenvalues of matrices with special banded structure. Thus we can design a pre-processing algorithm for block matrices by using the discrete non-commutative hungry Toda-II lattices.

%We know that $\mathscr{J}_{n}^{(\alpha)}$ is a $np\times np$ square matrix and the eigenvalues of $\mathscr{J}_{n}^{(\alpha)}$ are invariant under the evolution from $\alpha$ to $\alpha+1$ by the discrete non-commutative hungry Toda-II lattice \eqref{toda2}. Consequently, computing the eigenvalues of $\mathscr{J}_{n}^{(0)}$ is equivalent to computing the eigenvalues of $\mathscr{J}_{n}^{(\theta\alpha)}$ as $\alpha\rightarrow\infty$.
%If we give the input parameters $\theta$, $m$, $e_{i}^{(k)}$, $q_{i}^{(0)},i=1,\cdots,m, k=1,\cdots,\theta-1$ and the maximal iteration number $\alpha$, by iterating, we get the output parameters $q_{i}^{(\theta\alpha)}$. Let $\lambda_{i,j}^{(\theta\alpha)},\,i=1,2,\cdots,m,\,j=1,2,\cdots,p$ denotes the eigenvalues of each block $q_{i}^{(\theta\alpha)},\,i=1,2,\cdots,m$, then the eigenvalues of $\mathscr{J}_{n}^{(\theta\alpha)}$ is equivalent to $\lambda_{i,j}^{(\theta\alpha)},\,i=1,2,\cdots,m,\,j=1,2,\cdots,p$ as $\alpha\rightarrow\infty$.

\section{Numerical experiments}\label{sec.5}
In this section, we will present some numerical examples to demonstrate the performance of the generalized block qd-algorithm. All computations are carried out in Matlab R2019b (academic use) on a computer with a 2.20 GHz CPU and 4 GB main memory.
\begin{example}\label{eg1}
Let us consider the first case where
\begin{align*}
q_{1}^{(0)}=\left(\begin{array}{ccc}
7 & 0 & 1\\
2 & 9 & 8\\
-3& 1 & 3
\end{array}\right),\quad
q_{2}^{(0)}=\left(\begin{array}{ccc}
8 & 5 & 1\\
1 & 8 & 2\\
2 & 0 & 10
\end{array}\right),\quad
q_{3}^{(0)}=\left(\begin{array}{ccc}
5 & 1 & 1\\
2 & 7 &-6\\
0 & 3 & 8
\end{array}\right),
\end{align*}
\begin{align*}
q_{4}^{(0)}=\left(\begin{array}{ccc}
6 & 0 & 2\\
0 & 8 & 1\\
3 & 3 & 5
\end{array}\right),\quad
q_{5}^{(0)}=\left(\begin{array}{ccc}
5 & 0 & 1\\
1 & 5 & 6\\
2 & 1 & -10
\end{array}\right),\quad
e_{1}^{(0)}=\left(\begin{array}{ccc}
8 & 1 & 1\\
10& 8 & 2\\
2 & 0 & 9
\end{array}\right),
\end{align*}
%\vspace{-3em}
\begin{align*}
e_{2}^{(0)}=\left(\begin{array}{ccc}
9 & 5 & 1\\
1 & 6 & 0\\
1 & 1 & 3
\end{array}\right),\quad
e_{3}^{(0)}=\left(\begin{array}{ccc}
6 & 0 &-2\\
-2& 4 & 1\\
6 & 1 &10
\end{array}\right),\quad
e_{4}^{(0)}=\left(\begin{array}{ccc}
15& 0 & 2\\
0 & 4 & 1\\
10& 2 & 8
\end{array}\right),
\end{align*}
%\vspace{-3em}
\begin{align*}
e_{1}^{(1)}=\left(\begin{array}{ccc}
2 &-1 & 3\\
2 & 2 & 2\\
1 & 4 & 3
\end{array}\right),\quad
e_{2}^{(1)}=\left(\begin{array}{ccc}
1 & 0 & 3\\
2 & 6 & 3\\
3 &-5 &10
\end{array}\right),
\end{align*}
%\vspace{-3em}
\begin{align*}
e_{3}^{(1)}=\left(\begin{array}{ccc}
6 & 0 & 1\\
0 & 8 & 0\\
3 & 0 & 10
\end{array}\right),\quad
e_{4}^{(1)}=\left(\begin{array}{ccc}
4 & 0 & 6\\
1 & 4 & 1\\
-3& 0 & 2
\end{array}\right)
\end{align*}
in the discrete non-commutative hungry Toda-II lattice \eqref{toda2} with $\theta=2$, $n=5$ and $p=3$. Then, it follows that
\begin{align}\label{15}
\mathscr{J}_{n}^{(0)}&=\mathscr{L}_{n}^{(0)}\mathscr{L}_{n}^{(1)}\mathscr{R}_{n}^{(0)}\\
&=\left(\begin{array}{ccccccccccccccc}
7 & 0 & 1 & 1 & 0 & 0 & 0 & 0 & 0 & 0 & 0 & 0 & 0 & 0 & 0\\
2 & 9 & 8 & 0 & 1 & 0 & 0 & 0 & 0 & 0 & 0 & 0 & 0 & 0 & 0\\
-3& 1 & 3 & 0 & 0 & 1 & 0 & 0 & 0 & 0 & 0 & 0 & 0 & 0 & 0\\
58& 4 &22 &18 & 5 & 5 & 1 & 0 & 0 & 0 & 0 & 0 & 0 & 0 & 0\\
92& 94&104&13 &18 & 6 & 0 & 1 & 0 & 0 & 0 & 0 & 0 & 0 & 0\\
-7& 48&71 & 5 & 4 &22 & 0 & 0 & 1 & 0 & 0 & 0 & 0 & 0 & 0\\
93& 85&189&122& 95&100& 15& 6 & 5 & 1 & 0 & 0 & 0 & 0 & 0\\
75&114&147&56 &122&72 & 5 & 19& -3& 0 & 1 & 0 & 0 & 0 & 0\\
33&31 &153&61 & 1 &140& 4 & -1& 21& 0 & 0 & 1 & 0 & 0 & 0\\
0 & 0 & 0 & 6 & 80& 0 & 60& 19& 2 & 18& 0 & 1 & 1 & 0 & 0\\
0 & 0 & 0 &123&197&207& 23&104&-50& -2& 20& 2 & 0 & 1 & 0\\
0 & 0 & 0 &502&-162&1160&85&32&284& 12& 4 & 25& 0 & 0 & 1\\
0 & 0 & 0 & 0 & 0 & 0 &480&201&376&234& 24&113& 24& 0 & 9\\
0 & 0 & 0 & 0 & 0 & 0 & 79&257&-109&15&102&30 & 2 &13 & 8\\
0 & 0 & 0 & 0 & 0 & 0 &452&466&708&156& 62&156& 9 & 3 & 0\\
\end{array}\right).
\end{align}
\end{example}
Numerical results are shown below.
The asymptotic behaviour of $\|q_{k}\|_{F}$ and $\|e_{k}\|_{F}$ are shown in figure \ref{1111}. It is obvious from figure \ref{1111} that $\|q_{k}\|_{F}$ converges to some constant as iteration number $N$ increases and $\|e_{k}\|_{F}$ converges to zero as iteration number $N$ increases. This indicates that $\mathscr{J}_{n}^{(N)}$ tends to a block upper triangular matrix.

\begin{figure*}[htb]
\centering
\begin{subfigure}[b]{0.49\textwidth}
\includegraphics[width=\textwidth]{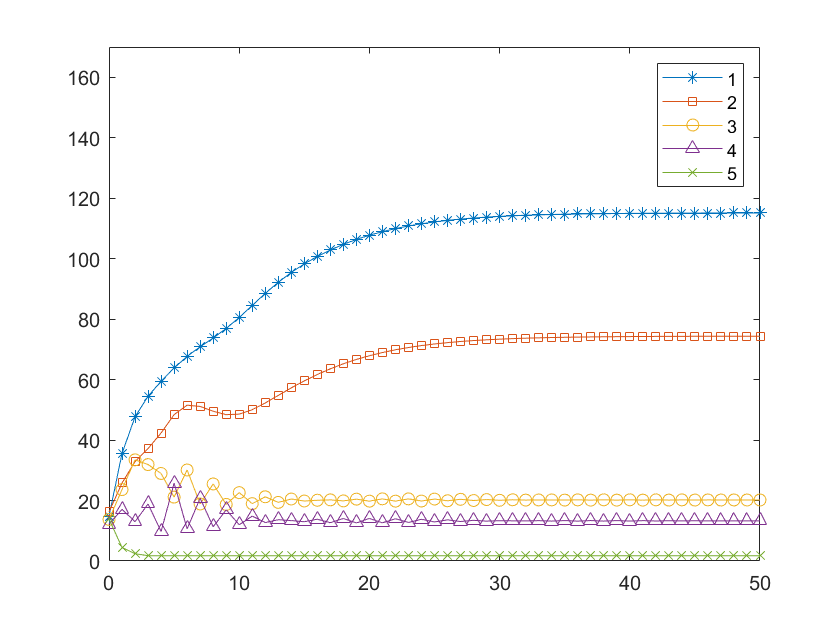}
\caption{%the iteration number $N$ (x-axis) and the value of $\|q_{k}\|_{F}$ for $k = 1,2,3,4,5$ (y-axis).
}
\end{subfigure}
\begin{subfigure}[b]{0.49\textwidth}
\includegraphics[width=\textwidth]{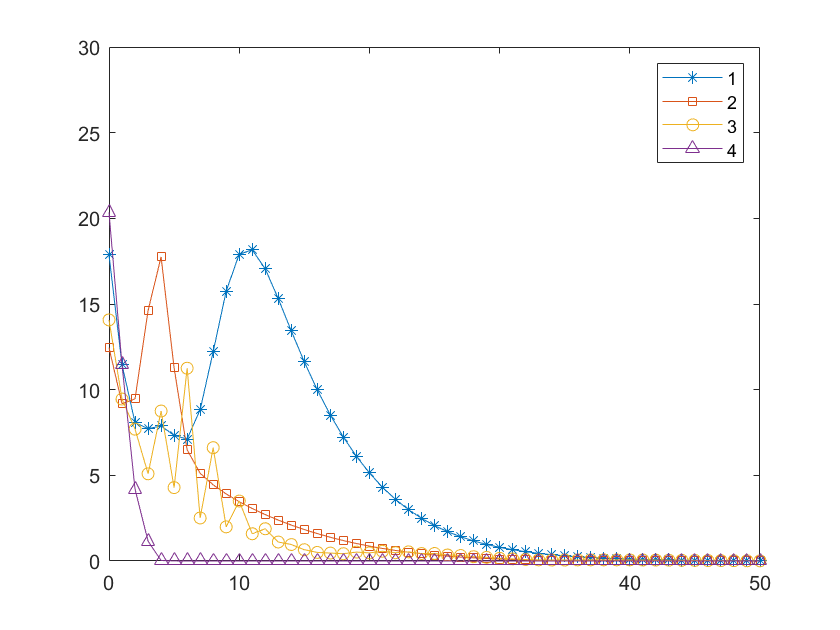}
\caption{%the iteration number $N$ (x-axis) and the value of $\|e_{k}\|_{F}$ for $k = 1,2,3,4$ (y-axis).
}
\end{subfigure}
\caption{Asymptotic behavior in Example \ref{eg1}. (A): the iteration number $N$ (x-axis) and the value of $\|q_{k}\|_{F}$ for $k = 1,2,3,4,5$ (y-axis). (B): the iteration number $N$ (x-axis) and the value of $\|e_{k}\|_{F}$ for $k = 1,2,3,4$ (y-axis).}
\label{1111}
\end{figure*}
Next, we investigate the accuracy of eigenvalues computed under the generalized qd-algorithm's pre-processing.
The first column of table \ref{qqq} denotes the eigenvalues of $\mathscr{J}_{n}^{(0)}$ in (\ref{15}) obtained by Matlab function Eigenvalues, while the second column describes the results computed with preconditioning.
It is observed that the generalized qd-algorithm can indeed help us compute eigenvalues with high accuracy.

\begin{table}[h]
	\caption{Computed eigenvalues of $\mathscr{J}_{n}^{(0)}$ in Example \ref{eg1}.}
    \centering
    \scalebox{0.8}{
	\begin{tabular}{ll}
		\toprule  % ������
		Matlab & generalized qd-algorithm \\ \hline
		0.076645990783392&0.076645990783389\\
        0.353812648575613&0.353812648575593\\
        0.742197652318054 &0.742197652318063 \\
        3.798992251521434 $-$ 1.205082251424433i&3.798992251521439 $-$ 1.205082251424437i \\
        3.798992251521434 $+$ 1.205082251424433i &3.798992251521439 $+$ 1.205082251424437i\\
        $-$7.000446257430013 &$-$7.000446257620274\\
        7.765597517750727 $-$ 1.441904857381309i &7.765597517614697 $-$ 1.441904857525976i \\
        7.765597517750727 $+$ 1.441904857381309i & 7.765597517614697 $+$ 1.441904857525976i \\
        15.132368187137137 &15.132368187137070\\
        17.792893502360450 $-$ 1.598641807492877i&17.792893502360485 $-$ 1.598641807492811i\\
        17.792893502360450 $+$ 1.598641807492877i&17.792893502360485 $+$ 1.598641807492811i\\
        30.579683715887128&30.579683715886933\\
        37.067180191075906&37.067180191075998\\
        41.776330813552491 &41.776330813552505 \\
        54.557260514835193&54.557260514835086\\
		\bottomrule  % �ײ���
	\end{tabular}
   }
   \label{qqq}
\end{table}

\begin{example}\label{eg2}
 Let us consider the second case where
 \begin{align*}
e_{1}^{(0)}=\left(\begin{array}{cc}
6 & 8 \\
2 & 8
\end{array}\right),\quad
e_{2}^{(0)}=\left(\begin{array}{cc}
8 & -3 \\
1 & 12
\end{array}\right),\quad
e_{3}^{(0)}=\left(\begin{array}{cc}
1 & 3 \\
-5 & 1
\end{array}\right),\quad
e_{1}^{(1)}=\left(\begin{array}{cc}
1 & 2 \\
-2 & 8
\end{array}\right),
\end{align*}
\begin{align*}
e_{2}^{(1)}=\left(\begin{array}{cc}
8 & 2 \\
3 & 12
\end{array}\right),\quad
e_{3}^{(1)}=\left(\begin{array}{cc}
10 & -4 \\
3 & 12
\end{array}\right),
e_{1}^{(2)}=\left(\begin{array}{cc}
1 & 2 \\
0 & 1
\end{array}\right),
\end{align*}
\begin{align*}
e_{2}^{(2)}=\left(\begin{array}{cc}
3 & 4 \\
1 & 6
\end{array}\right),\quad
e_{3}^{(2)}=\left(\begin{array}{cc}
1 & 2 \\
1 & 3
\end{array}\right),\quad
q_{1}^{(0)}=\left(\begin{array}{cc}
4 & 4 \\
1 & 2
\end{array}\right),
\end{align*}
\begin{align*}
q_{2}^{(0)}=\left(\begin{array}{cc}
5 & 6 \\
1 & 6
\end{array}\right),\quad
q_{3}^{(0)}=\left(\begin{array}{cc}
2 & 1 \\
2 & 10
\end{array}\right),\quad
q_{4}^{(0)}=\left(\begin{array}{cc}
7 & -2 \\
1 & 8
\end{array}\right)
\end{align*}
in the discrete non-commutative hungry Toda-II lattice \eqref{toda2} with $\theta=3$, $n=4$ and $p=2$. Then, it follows that
\begin{align}\label{8}
\mathscr{J}_{n}^{(0)}&=\mathscr{L}_{n}^{(0)}\mathscr{L}_{n}^{(1)}\mathscr{L}_{n}^{(2)}\mathscr{R}_{n}^{(0)}\\
&=\left(\begin{array}{cccccccc}
4  &   4   &  1  &   0  &   0  &   0  &   0  &   0\\
1   &  2   &  0  &   1  &   0  &   0  &   0  &   0\\
44  &  56  & 13  &  18  &   1  &   0  &   0  &   0\\
17 &   34  & 1   & 23   &  0   &  1  &   0   &  0\\
143  & 166 &  128&   155&    21&     4  &   1  &   0\\
54  & 184  &  36 &  340 &    7 &   40  &   0  &   1\\
140 &  212 &  338&  822 &   75 &   98  &  19  &  -1\\
-220 & -292& -115&   180&     0&   231  &   0 &   24\\
\end{array}\right).
\end{align}
\end{example}

Numerical results are shown below.
The asymptotic behaviour of $\|q_{k}\|_{F}$ and $\|e_{k}\|_{F}$ are shown in figure \ref{6666}. It is obvious from figure \ref{6666} that $\|q_{k}\|_{F}$ converges to some constant as iteration number $N$ increases, $\|e_{k}\|_{F}$ converges to zero as iteration number $N$ increases.
\begin{figure*}[htb]
\centering
\begin{subfigure}[b]{0.49\textwidth}
\includegraphics[width=\textwidth]{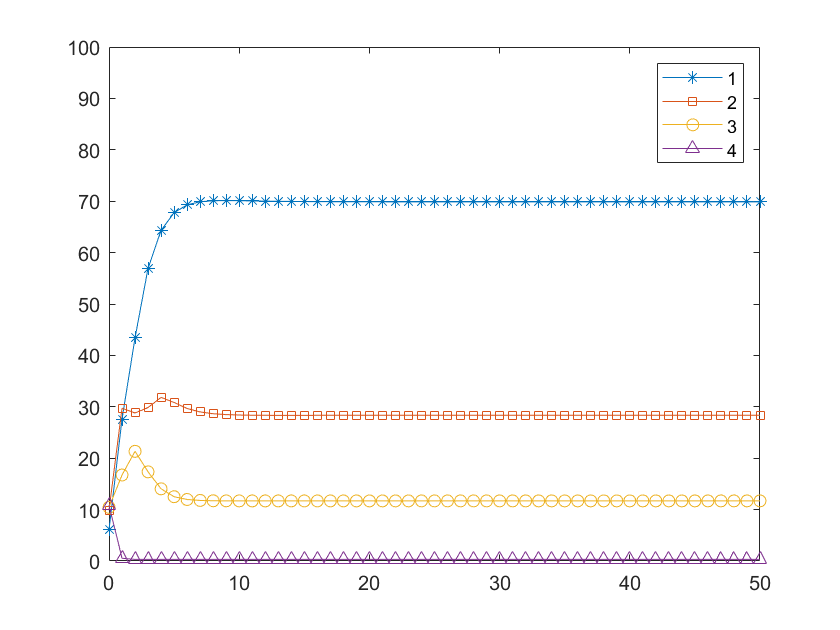}
\caption{%the iteration number $N$ (x-axis) and the value of $\|q_{k}\|_{F}$ for $k = 1,2,3,4,5$ (y-axis).
}
\end{subfigure}
\begin{subfigure}[b]{0.49\textwidth}
\includegraphics[width=\textwidth]{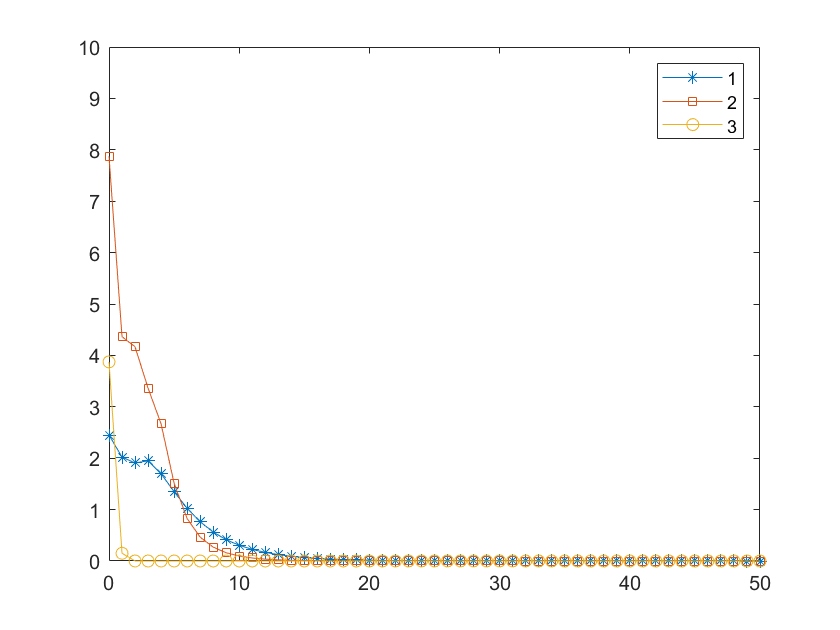}
\caption{%the iteration number $N$ (x-axis) and the value of $\|e_{k}\|_{F}$ for $k = 1,2,3,4$ (y-axis).
}
\end{subfigure}
\caption{Asymptotic behaviour in Example \ref{eg1}. (A): the iteration number $N$ (x-axis) and the value of $\|q_{k}\|_{F}$ for $k = 1,2,3,4$ (y-axis). (B): the iteration number $N$ (x-axis) and the value of $\|e_{k}\|_{F}$ for $k = 1,2,3$ (y-axis).}
\label{6666}
\end{figure*}

Table \ref{8888} displays the computed eigenvalues by Matlab function Eigenvalues and generalized block qd-algorithm, respectively. It is observed that the generalized block qd-algorithm can accurately compute both the real and imaginary parts of eigenvalues.
% coinciding with the convergence conclusion given in Theorem \ref{th.1}.

\begin{table}[h]
	\caption{Computed eigenvalues of $\mathscr{J}_{n}^{(0)}$ in Example \ref{eg2}.}
    \centering
    \scalebox{0.8}{
	\begin{tabular}{ll}
		\toprule  % ������
		Matlab & generalized qd-algorithm \\ \hline
		0.038550772971028 $-$ 0.033742102109088i&0.038550772971027 $-$ 0.033742102109069i\\
        0.038550772971028 $+$ 0.033742102109088i&0.038550772971027 $+$ 0.033742102109069i\\
        5.782189707201425 &5.782189707201426 \\
        9.513648907489330 &9.513648907489348 \\
        15.452211918682499 &15.452211918682455\\
        23.654136978615185 &23.654136978615195\\
        31.800649952367969 &31.800649952367955 \\
        59.720060989701565 & 59.720060989701587 \\
		\bottomrule  % �ײ���
	\end{tabular}
   }
   \label{8888}
\end{table}
%%%%%%%%%%%%%%%%%%%%%%%%%%%%%%%%%%%%%%
\section{Conclusions and discussions}\label{sec.6}

It is known that matrix-valued orthogonal polynomials and quasi-determinants play crucial roles in non-commutative integrable systems.
In this article, we first introduce the discrete non-commutative hungry Toda lattices and construct their quasi-determinant solutions by considering the compatibility conditions among adjacent families of matrix-valued $\theta$-deformed bi-orthogonal polynomials.
Motivated by the rich applications of integrability and orthogonality in numerical computations, we demonstrate that the corresponding finite-dimensional equation can serve as a pre-processing algorithm for computing eigenvalues of block Hessenberg matrices, called the generalized block qd-algorithm.
We establish the asymptotic convergence of the generalized block qd-algorithm under certain assumptions, and our numerical findings align with the theoretical results.

It should be noted that the block qd-algorithm can be traced back to the paper of Wynn \cite{wynn1963nc} in 1963. Draux and Sadik studied the asymptotic behavior of some eigenvalues of a block tridiagonal positive definite symmetric matrix and proved the convergence of the block qd-algorithm in \cite{draux2010}. Subsequently, they generalized the block qd-algorithm into the case of positive defnite symmetric block band matrix \cite{draux2012}. Although both our algorithm and theirs can be viewed as extensions of the block qd algorithm, the corresponding block matrices have different structures. Furthermore, the generalized block qd-algorithm presented here is derived from the standpoint of non-commutative integrable systems and matrix-valued orthogonal polynomials.

Finally, we would like to remark that the discrete hungry Lotka-Volterra system describing predator-prey interactions is closely related to the discrete hungry Toda equation and can be used to compute banded matrices.
In \cite{gilson23}, the semi-discrete non-commutative hungry Lotka-Volterra lattices have been derived by using a moment modification method. Therefore, it is a natural idea to investigate whether the full-discrte non-commutative hungry Lotka-Volterra lattices have relationships to matrix computation. We will study these problems in the future.
%%%%%%%%%%%%%%%%%%%%%%%%%%%%%%%%%%%%%%
\section*{Acknowledgments}
%The authors would like to thank referees for valuable comments. 
J.-Q Sun is supported by National Natural Science Foundation of China (Grant No. 12071447). S.-H Li is supported by National Natural Science Foundation of China (Grant Nos. 12175155, 12101432). K.-Y Lu is supported by National Natural Science Foundation of China (Grant No. 12001048).

\appendix
\section{Quasi-determinants}
There have been numerous references regarding with the basic definitions and properties of quasi-determinants, please see \cite{gelfand05,gelfand91,krob95} and references therein.
In general, an $n\times n$ matrix $A$ over a skew field $\mathscr{R}$ has $n^{2}$ quasi-determinants written as $|A|_{i,j}$ for $i,j=1,\ldots,n$, which are also elements of $\mathscr{R}$. They are defined by
\[
|A|_{i,j}=\left|\begin{array}{ccc}
A^{i,j} & c_{j}^{i}\\
r_{i}^{j} & \fbox{$a_{i,j}$}
\end{array}\right|=a_{i,j}-r_{i}^{j}(A^{i,j})^{-1}c_{j}^{i},\quad A^{-1}=(|A|_{j,i}^{-1})_{i,j=1,\ldots,n},
\]
where $r_{i}^{j}$ represents the $ith$ row of $A$ with the $jth$ element removed, $c_{j}^{i}$ represents the $jth$ column with the $ith$ element removed and $A^{i,j}$ represents the submatrix obtained by removing the $ith$ and the $jth$ column from $A$. Assume that $A^{i,j}$ is invertible.
This is in fact the Schur complement of $a_{i,j}$. However, as an analogy of determinant, quasi-determinants has some similar properties with determinants. 

The first one is that a quasi-determinant could be used to describe the solution of a linear system with non-commutative coefficients. This result is used to show the quasi-determinant expressions for matrix-valued orthogonal polynomials.
\begin{proposition}\label{e}
Let $A=(a_{i,j})$ be an $n\times n$ matrix over $\mathscr{R}$. Assume that all quasi-determinants $|A|_{i,j}$ are well defined and invertible. Then
\[
\left\{\begin{array}{ccc}
a_{1,1}x_{1}+\cdots a_{1,n}x_{n}=\xi_{1}\\
\vdots\\
a_{n,1}x_{1}+\cdots a_{n,n}x_{n}=\xi_{n}
\end{array}\right.
\]
has a solution $\xi_{i}\in \mathscr{R}$ if and only if
\[
x_{i}=\sum_{j=1}^{n}|A|_{j,i}^{-1}\xi_{j},\quad i=1,\cdots,n.
\]
\end{proposition}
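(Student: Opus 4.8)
The plan is to deduce the statement from the fact that $\widehat{A}:=(|A|_{j,i}^{-1})_{i,j=1}^{n}$ is a genuine two-sided inverse of $A$ over $\mathscr{R}$, which is precisely the matrix identity recorded at the beginning of this appendix. Granting that, the two implications are each one line: if $\sum_{j}a_{i,j}x_j=\xi_i$ for every $i$, then left multiplication by $\widehat{A}$ gives $x_i=(\widehat{A}\xi)_i=\sum_{j}|A|_{j,i}^{-1}\xi_j$; conversely, if $x_i=\sum_{j}|A|_{j,i}^{-1}\xi_j$, that is $x=\widehat{A}\xi$, then $Ax=A\widehat{A}\xi=\xi$. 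So the whole proposition reduces to the scalar relations
\[
\sum_{k=1}^{n}|A|_{k,i}^{-1}a_{k,j}=\delta_{i,j}\qquad\text{and}\qquad\sum_{k=1}^{n}a_{i,k}|A|_{j,k}^{-1}=\delta_{i,j},\qquad i,j=1,\dots,n,
\]
which are the componentwise forms of $\widehat{A}A=\mathbb{I}_n$ and $A\widehat{A}=\mathbb{I}_n$.

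To prove these from scratch I would induct on $n$ by carrying out one step of non-commutative Gaussian elimination. The case $n=1$ is immediate, as $|A|_{1,1}=a_{1,1}$. For $n>1$ pivot on the $(n,n)$-entry and form the Schur complement $B=(b_{i,j})_{i,j=1}^{n-1}$ with $b_{i,j}=a_{i,j}-a_{i,n}a_{n,n}^{-1}a_{n,j}$. Sylvester's identity for quasi-determinants \cite{gelfand05,krob95} yields $|B|_{i,j}=|A|_{i,j}$ for $i,j\le n-1$; in particular the $|B|_{i,j}$ are again invertible, which is the step where the hypothesis on $A$ is propagated down the induction. Eliminating $x_n$ through the last equation turns $Ax=\xi$ into $B\tilde{x}=\tilde{\eta}$ with $\tilde{\eta}_i=\xi_i-a_{i,n}a_{n,n}^{-1}\xi_n$, and the inductive hypothesis gives $x_i=\sum_{j\le n-1}|A|_{j,i}^{-1}(\xi_j-a_{j,n}a_{n,n}^{-1}\xi_n)$ for $i\le n-1$. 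Comparing with the target $x_i=\sum_{j\le n}|A|_{j,i}^{-1}\xi_j$ then leaves exactly the homological relation $\sum_{j=1}^{n}|A|_{j,i}^{-1}a_{j,n}=0$ ($i<n$), together with the companion statement for the $n$-th component and the transposed identities, all of which are instances of the homological relations \eqref{qq} and can be verified directly from the definition of the quasi-determinant.

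The main obstacle is bookkeeping rather than depth: every manipulation must keep the non-commutative factors on the correct side, and, since over a ring a one-sided inverse need not be two-sided, one must establish both $\widehat{A}A=\mathbb{I}_n$ and $A\widehat{A}=\mathbb{I}_n$ — either by running the elimination from the last and from the first equation, or by invoking that $M_{n}(\mathscr{R})$ is directly finite. A shortcut fully adequate for the way this proposition is used in the paper is simply to cite the inverse formula $A^{-1}=(|A|_{j,i}^{-1})_{i,j}$ from the standard references \cite{gelfand05,gelfand91,krob95}, after which the conclusion follows as in the first paragraph.
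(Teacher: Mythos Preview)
The paper does not actually prove this proposition: it is stated in the appendix as a known result, with the inverse formula $A^{-1}=(|A|_{j,i}^{-1})_{i,j}$ displayed immediately beforehand and attributed to the standard references \cite{gelfand05,gelfand91,krob95}. Your ``shortcut'' in the final paragraph---cite the inverse formula and read off the solution---is therefore exactly the paper's (implicit) approach, and your first paragraph makes that deduction precise.

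Where you go beyond the paper is in sketching an inductive proof of the inverse formula itself via non-commutative Gaussian elimination and Sylvester's identity. That sketch is sound and is indeed how the result is proved in the references you cite; it is simply more than the paper asks for. One small caveat: you pivot on $a_{n,n}$ and use $a_{n,n}^{-1}$, which is not guaranteed by the hypothesis as stated (only the quasi-determinants $|A|_{i,j}$ are assumed invertible, not the individual entries). In the standard treatments this is handled by pivoting on a suitable quasi-determinant of a submatrix rather than on a bare entry, or by noting that over a skew field one can always permute to an invertible pivot; you may want to flag that adjustment if you write the argument out in full.
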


The second important property for quasi-determinant is the so-called non-commutative Jacobi identity, or non-commutative Sylvester's theorem. The simplest version of this identity is given by
\begin{align}\label{51}
\left|\begin{array}{ccc}
A&B&C\\
D&f&g\\
E&h&\fbox{$i$}
\end{array}\right|=\left|\begin{array}{ccc}
A&C\\
E&\fbox{$i$}
\end{array}\right|-\left|\begin{array}{ccc}
A&B\\
E&\fbox{$h$}
\end{array}\right|\left|\begin{array}{ccc}
A&B\\
D&\fbox{$f$}
\end{array}\right|^{-1}\left|\begin{array}{ccc}
A&C\\
D&\fbox{$g$}
\end{array}\right|.
\end{align}
As a direct corollary, we have the following homological relations
\begin{align}\label{qq}
\begin{aligned}
\left|\begin{array}{ccc}
A&B&C\\
D&f&g\\
E&\fbox{$h$}&i
\end{array}\right|=\left|\begin{array}{ccc}
A&B&C\\
D&f&g\\
E&h&\fbox{$i$}
\end{array}\right|\left|\begin{array}{ccc}
A&B&C\\
D&f&g\\
0&\fbox{$0$}&1
\end{array}\right|,\\
\left|\begin{array}{ccc}
A&B&C\\
D&f&\fbox{$g$}\\
E&h&i
\end{array}\right|=\left|\begin{array}{ccc}
A&B&0\\
D&f&\fbox{$0$}\\
E&h&1
\end{array}\right|\left|\begin{array}{ccc}
A&B&C\\
D&f&g\\
E&h&\fbox{$i$}
\end{array}\right|,
\end{aligned}
\end{align}
which were used to show the quasi-determinant expressions for coefficients in discrete spectral transformations.
%%%%%%%%%%%%%%%%%%%%%%%%%%%%%%%%%%%%%%%%%%%%%%%%%%%%%%%%%%%%%%%%%%%%%%%%%%%
%%%%%%%%%%%%%%%%%%%%%%%%%%%%%%%%%%%%%%%%%%%%%%%%%%%%%%%%%%%%%%%%%%%%%%%%%%%%%

\end{document}